\documentclass[11pt]{article}
\usepackage{geometry}
\usepackage{amssymb}
\usepackage{amsmath}
\usepackage{float}
\usepackage{color}
\usepackage{graphicx}
\usepackage{subfigure}
\usepackage{tabu}
\usepackage{booktabs}
\usepackage{bookmark}
\usepackage{array}
\usepackage{caption}
\usepackage{amsthm}
\usepackage{abstract}
\usepackage{booktabs}
\usepackage{cite}
\usepackage{indentfirst}
\usepackage{algorithm}
\usepackage{algorithmic}
\usepackage{hyperref}
\usepackage{subfigure}
\usepackage{multirow}
\usepackage{mathtools}
\usepackage{color}
\newtheorem{mythm}{Theorem}[section]
\newtheorem{mydef}{Definition}[section]
\newtheorem{mylem}{Lemma}[section]
\newtheorem{mycor}{Corollary }[section]
\newtheorem{myrek}{Remark}[section]

\numberwithin{equation}{section}
\textwidth=7.0in
\textheight=7.5in
\oddsidemargin=-0.25in
\evensidemargin=-0.25in
\baselineskip=6pt
\parindent0.0in
\parskip6pt
\topmargin=-0.5in
\topskip =0 in
\footskip=0.5in
\hoffset = 0.0in
\voffset = 0.0in
\geometry{a4paper,left=2cm,right=2cm,top=2.5cm,bottom=3.0cm}
\makeatletter
\@addtoreset{equation}{section}
\makeatother

\title{\bf A Randomized Singular Value Decomposition for  Third-Order Oriented Tensors}
\author{
\footnote{Published in Journal of Optimization Theory and Applications.}
	Minghui Ding\footnote{School of Mathematical Sciences, Ocean University of China, Qingdao 266100, China.
		E-Mail: {\tt dingminghui@stu.ouc.edu.cn}} \and
    	Yimin Wei\footnote{School of Mathematical Sciences and and Key Laboratory of Mathematics for Nonlinear Sciences, Fudan University, Shanghai 200433, China.
		E-Mail: {\tt ymwei@fudan.edu.cn}}\and
	    Pengpeng Xie\footnote{Corresponding author (P. Xie). School of Mathematical Sciences, Ocean University of China, Qingdao 266100, China.
		E-Mail: {\tt xie@ouc.edu.cn}.}
}
\date{}

\begin{document}	
	\maketitle
\begin{abstract}
 {\bf Abstract}

The oriented singular value decomposition (O-SVD) proposed by Zeng and Ng 
provides a hybrid approach to the t-product based third-order tensor singular value decomposition with the transformation matrix being a factor matrix of the higher order singular value decomposition. Continuing along this vein, this paper explores realizing the O-SVD efficiently by drawing a connection to the tensor-train rank-1 decomposition and gives a truncated O-SVD. Motivated by the success of probabilistic algorithms, we develop a randomized version of the O-SVD and present its detailed error analysis. The new algorithm has advantages in efficiency while keeping good accuracy compared with the current tensor decompositions. Our claims are supported by numerical experiments on several oriented tensors from real applications.
\\ \hspace*{\fill} \\
{\bf Keywords:} Oriented tensor {$\cdot$} Singular value decomposition {$\cdot$} Truncation { $\cdot$} Randomized algorithm
\\ \hspace*{\fill} \\
{\bf Mathematics Subject Classification:} 65F30, 65F99, 15A69\\
\end{abstract}

\newpage

\section{Introduction}
\hskip 2em Tensors \cite{Kolda2009, Qi2017SIAM, Qi2018Springer} are multidimensional arrays that have been used in diverse fields of applications, including psychometrics \cite{1970Analysis}, image/video and signal processing \cite{2015Tensor}, machine learning \cite{Johan2014}, and web link analysis \cite{Kenny2005}. The compression, sort, analysis, and many other processing of tensor data rely on the tensor decomposition. Various tensor decompositions under different tensor products such as the CANDECOMP/PARAFAC \cite{1970Analysis, 1969Foundations}, higher order singular value decomposition (HOSVD) \cite{1966Some, De2000}, T-SVD \cite{Kilmer2011, Kilmer2013}, T-CUR \cite{CheJOTA2022},  tensor-train \cite{Oseledets2011} and  tensor-train rank-1 (TTr1) SVD (TTr1SVD) \cite{constructiveBatselier2015} have been investigated to extend linear algebra methods to the multilinear context. Among these decompositions, the HOSVD algorithm is not orientation dependent and can achieve high compression ratios if the target rank for the Tucker approximation is small compared to the original dimensions. By contrast, since the Fourier matrix is independent of the tensor, the T-SVD cannot embody the data feature and is not suitable for all orientation-dependent data.  However, we usually confront orientation-dependent tensors which have high correlation among frontal slices in applications. Recently, Zeng and Ng \cite{Zeng2020} proposed  a new decomposition for third-order tensors based on the HOSVD and T-SVD, which is named oriented singular value decomposition (O-SVD). Like the T-SVD, the O-SVD also aims at tensors with fixed orientations and has been demonstrated to be useful in approximation and data compression.

\hskip 2em  While the O-SVD combines the ideas of the HOSVD and T-SVD, it can still be expressed with the sum of outer product terms. This property is also reminiscent of the TTr1SVD which decomposes an arbitrary tensor into a finite sum of orthogonal rank-1 outer products. Unlike the O-SVD, one needs to progressively reshape and compute the SVD of each singular vector to produce the TTr1 decomposition \cite{constructiveBatselier2015}. Therefore, in this paper, we first consider the acceleration of the computation of the O-SVD by means of the constructive approach for the TTr1SVD. Then we turn to the numerical approximation of the O-SVD. A straightforward $r$-term approximation of the O-SVD has been proposed by keeping $r$ terms for which singular values are the largest in magnitude and discarding the other terms. We propose an alternative truncation strategy  for better preserving the original structure inherited from the HOSVD and T-SVD.
	
\hskip 2em In recent years, randomized matrix methods have been used to efficiently and accurately compute approximate low-rank matrix decompositions and the least squares problem (see \cite{Halko2011, boyd2011randomized, DRINEAS2016RandNLA, SHABAT2018246,Wei2016,Xie2019}). These algorithms are easy to implement, and have been extended to the singular value decomposition of tensors based on different tensor products \cite{che2019, CheSIMAX2020, CheJCAM2021, CheJSC2021, 2020Randomized, zhang2018}. Specially, Zhang \emph{et al}. \cite{zhang2018} proposed an algorithm that extends a well-known randomized matrix method to the T-SVD that was called  the RT-SVD, which is more computationally efficient on large data sets. Che and Wei \cite{che2019} designed randomized algorithms for computing the  Tucker and tensor train approximations of tensors with unknown multilinear rank and analyzed their probabilistic error bounds under certain assumptions. Minster \emph{et al}. \cite{2020Randomized} presented  randomized  algorithms of the  HOSVD (RHOSVD) and sequentially truncated HOSVD (RSTHOSVD) in the Tucker representation and gave a detailed probabilistic error analysis for both algorithms. They also applied the adaptive randomized algorithm to find a low-rank representation satisfying a given tolerance and proposed a structure-preserving decomposition where the core tensor retains favorable properties of the original tensor. However, the randomized algorithms mentioned above are not useful for tensors with a fixed orientation involving time series or other ordered data that are highly correlated among slices. Hence, we are motivated to design a new type of randomization strategy corresponding to the O-SVD, and hopefully, this technique can greatly reduce the computational cost while maintaining the accuracy.

\hskip 2em The rest of this paper is organized as follows. In Section \ref{Preliminaries}, we introduce some basic definitions and preliminaries. In Section \ref{OSVDsection},  we briefly introduce the O-SVD, discuss its connection to TTr1SVD and present a new truncation strategy for the O-SVD. Thereafter, in Section \ref{ROSVDsection}, a randomized tensor algorithm based on the O-SVD is proposed. We also give an expected error bound and compare the computational and memory cost with the RT-SVD and RHOSVD. Section \ref{Numericalsection} presents numerical results on the approximation error and the computational complexity of the algorithm, and compares it with some existing methods. Some conclusions are presented in Section \ref{Conclusion}.		
	
\section{Preliminaries}\label{Preliminaries}
\hskip 2em In this section, we introduce definitions and notation used throughout the paper. Scalars are denoted by lowercase letters, e.g. $a$, vectors are denoted by bold-face lowercase letters, e.g. $\boldsymbol{a}$, matrices are denoted by bold-face capitals, e.g. $\boldsymbol{A}$, and tensors are written as calligraphic letters, e.g. $\mathcal{A}$. The $i$th entry of a vector  $\boldsymbol{a}$ is denoted by $a_{i}$, and the $(i,j,k)$th element of a third-order tensor $\mathcal{A}$ is denoted by $a_{ijk}$. For convenience, we sometimes use the MATLAB notation to denote subportions of a matrix or tensor, (e.g., $\mathcal{A}(:,:,k)$ denotes the $k$th frontal slice of a tensor and $\boldsymbol{A}(i,:)$ the $i$th row of a matrix). 

\hskip 2em A mode-$n$ fiber is a column vector defined by fixing every index but the $n$th index, and a mode-$(m,n)$ slice is a matrix defined by fixing every index but the $m$th index and the $n$th index.  A mode-$(1,2)$ slice is also called a frontal slice.  The mode-$n$ unfolding of a tensor $\mathcal{A}\in \mathbb{R}^{I_{1} \times I_{2} \times I_{3} }$ is denoted by $\boldsymbol{A}_{(n)}$ and arranges the mode-$n$ fibers to be the columns of the resulting matrix. 

	\begin{mydef}[ {M}ode-$n$ product \cite{Kolda2009}]\label{d2}
	The mode-$n $ product  of a tensor $\mathcal{A} \in \mathbb{R}^{I_{1} \times I_{2} \times \cdots \times I_{N}}$ by a matrix $\boldsymbol{B} \in \mathbb{R}^{J_{n} \times I_{n}}$, denoted by $\mathcal{A} \times_{n} \boldsymbol{B}${\color{blue},} is a tensor $\mathcal{C} \in \mathbb{R}^{I_{1} \times \cdots \times I_{n-1} \times J_{n} \times I_{n+1} \times \cdots \times I_{N}}$ with
	\begin{equation*}
	c_{i_{1} \ldots i_{n-1} j i_{n+1} \ldots i_{N}}=\sum_{i_{n}=1}^{I_{n}} a_{i_{1} \ldots i_{n-1} i_{n} i_{n+1} \ldots i_{N}} b_{j i_{n}},
	\end{equation*}
	where $n=1,2, \ldots, N$.
\end{mydef}
\begin{mydef}[ {I}nner product \cite{Kolda2009}]
	The inner product of two tensors $\mathcal{A}$, $\mathcal{B} \in \mathbb{C}^{I_{1} \times I_{2} \times \cdots \times I_{N}}$ is defined as
	\begin{equation*}
	\langle\mathcal{A}, \mathcal{B}\rangle=\sum_{i_{1}, i_{2}, \ldots, i_{N}} \overline{a}_{i_{1} i_{2} \cdots i_{N}} b_{i_{1} i_{2} \ldots i_{N}} .
	\end{equation*}
\end{mydef}
 We call two tensors  orthogonal  if their inner product is $0$. The norm of a tensor is taken to be the Frobenius norm $\|\mathcal{A}\|_{F}=\langle\mathcal{A}, \mathcal{A}\rangle^{1 / 2}$.
\begin{mydef}[ {O}uter product \cite{constructiveBatselier2015}]
A third-order rank-1 tensor $\mathcal{A}$  can always be written as the outer product
	\begin{equation*}
	\sigma(\boldsymbol{a} \circ \boldsymbol{b} \circ \boldsymbol{c}) \quad \text { with components } a_{ijk}=\sigma a_{i} b_{j} c_{k}
	\end{equation*}
	with $\sigma \in \mathbb{R}$, whereas $\boldsymbol{a}$, $\boldsymbol{b}$, and $\boldsymbol{c}$ are vectors of arbitrary lengths. Using the mode-$n$ multiplication, this outer product can also be written as
	$\sigma_{\times_{1}} \boldsymbol{a}_{\times_{2}} \boldsymbol{b}_{\times_{3}} \boldsymbol{c}$, where $\sigma$ is now regarded as a $1 \times 1 \times 1$ tensor.
\end{mydef}
	\begin{mydef}[ {T}ensor-tensor product\cite{Zeng2020}]\label{d1}
	The three-mode product of $\mathcal{A}\in \mathbb{R}^{I_{1} \times I_{2} \times I_{3} }$, $\mathcal{B}\in \mathbb{R}^{I_{2} \times I_{4} \times I_{3} }$ denoted by $\mathcal{A} *_{3} \mathcal{B}$, is of size $I_{1} \times I_{4} \times I_{3} $, which is given by
	\begin{equation*}
	\left(\mathcal{A} *_{3} \mathcal{B}\right)(:,:, k)=\mathcal{A}(:,:, k) \mathcal{B}(:,:, k), \quad k=1, \ldots, I_{3}.
	\end{equation*}
	\end{mydef}
	\begin{mydef}[ {T}ranspose]
	If $\mathcal{A}$ is an $I_{1} \times I_{2} \times I_{3} $ tensor, then $\mathcal{A}^{T}$ is an $I_{2} \times I_{1} \times I_{3}$ tensor obtained by  transposing each of the frontal slices, i.e., $\mathcal{A}^{T}(:,:,i)=\mathcal{A}(:,:,i)^{T}$, for $i=1,\ldots, I_{3}$.
	\end{mydef}
\hskip 2em We should note that the definition here of a transpose operation for tensors is different from that in \cite{Kilmer2011}. The following lemma introduces some properties of the mode-$n$ product.
      \begin{mylem}[\cite{Zeng2020, Kolda2009} ]\label{gaixie}
	Let $\mathcal{A} \in \mathbb{R}^{I_{1} \times I_{2} \times I_{3} } ,\boldsymbol{M}\in \mathbb{R}^{J_{n} \times I_{n}}$, $\mathcal{B}=\mathcal{A} \times_{n}\boldsymbol{M}, n=1,2,3.$ Then\\
{\rm (1)} $\boldsymbol{B}_{(n)}=\boldsymbol{M}\boldsymbol{A}_{(n)}$;\\
{\rm	(2)} If $\boldsymbol{M}^{(n)}$ is  orthonormal for $J_{n}=I_{n}$, then $ \mathcal{A}=\mathcal{B} \times_{n}\boldsymbol{M}^{T}$;\\
{\rm	(3)} For $\boldsymbol{M}_{1} \in \mathbb{R}^{J_{n} \times I_{n}},\boldsymbol{M}_{2} \in \mathbb{R}^{ J^{'}_{n}\times J_{n}}$, then $\left( \mathcal{A} \times_{n}\boldsymbol{M}_{1}\right) \times_{n}\boldsymbol{M}_{2}=\mathcal{A} \times_{n}\left( \boldsymbol{M}_{2}\boldsymbol{M}_{1}\right)$.
	\end{mylem}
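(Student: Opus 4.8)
The plan is to prove part (1) directly from the entrywise definition of the mode-$n$ product, and then to obtain parts (3) and (2) in a couple of lines, since once the mode-$n$ product is rewritten as ordinary matrix multiplication on the mode-$n$ unfolding, associativity of matrix multiplication and the orthogonality relation $\boldsymbol{M}^{T}\boldsymbol{M}=\boldsymbol{I}$ do all the remaining work.

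First I would establish (1). By Definition \ref{d2}, the entries of $\mathcal{B}=\mathcal{A}\times_{n}\boldsymbol{M}$ are $b_{i_{1}\ldots i_{n-1}ji_{n+1}\ldots i_{N}}=\sum_{i_{n}=1}^{I_{n}}a_{i_{1}\ldots i_{n}\ldots i_{N}}\,m_{ji_{n}}$. The mode-$n$ unfolding arranges the mode-$n$ fibers as columns, so $(\boldsymbol{B}_{(n)})_{j\ell}=b_{i_{1}\ldots i_{n-1}ji_{n+1}\ldots i_{N}}$, where $\ell$ denotes the column index assigned to the remaining multi-index $(i_{1},\ldots,i_{n-1},i_{n+1},\ldots,i_{N})$ by the fixed bijection used in the unfolding; the same bijection governs $\boldsymbol{A}_{(n)}$, because $\mathcal{A}$ and $\mathcal{B}$ share all dimensions except the $n$th. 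Substituting the displayed formula gives $(\boldsymbol{B}_{(n)})_{j\ell}=\sum_{i_{n}}m_{ji_{n}}(\boldsymbol{A}_{(n)})_{i_{n}\ell}=(\boldsymbol{M}\boldsymbol{A}_{(n)})_{j\ell}$, which is exactly (1). The only point needing care is that this column-index map is identical on both sides, which is what makes the matrix identity meaningful.

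Next I would deduce (3) from (1). Set $\mathcal{C}=\mathcal{A}\times_{n}\boldsymbol{M}_{1}$; applying (1) twice yields $\bigl(\mathcal{C}\times_{n}\boldsymbol{M}_{2}\bigr)_{(n)}=\boldsymbol{M}_{2}\boldsymbol{C}_{(n)}=\boldsymbol{M}_{2}\boldsymbol{M}_{1}\boldsymbol{A}_{(n)}$, while $\bigl(\mathcal{A}\times_{n}(\boldsymbol{M}_{2}\boldsymbol{M}_{1})\bigr)_{(n)}=(\boldsymbol{M}_{2}\boldsymbol{M}_{1})\boldsymbol{A}_{(n)}$. Since the mode-$n$ unfolding is a bijection between the relevant tensor space and the corresponding matrix space, equality of the two unfoldings forces equality of the two tensors; associativity of matrix multiplication is the only fact invoked. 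For (2) I would then specialize: when $J_{n}=I_{n}$ and $\boldsymbol{M}$ is orthonormal, $\boldsymbol{M}^{T}\boldsymbol{M}=\boldsymbol{I}_{I_{n}}$, so by (3), $\mathcal{B}\times_{n}\boldsymbol{M}^{T}=(\mathcal{A}\times_{n}\boldsymbol{M})\times_{n}\boldsymbol{M}^{T}=\mathcal{A}\times_{n}(\boldsymbol{M}^{T}\boldsymbol{M})=\mathcal{A}\times_{n}\boldsymbol{I}_{I_{n}}=\mathcal{A}$, the last equality being immediate from the entrywise definition (equivalently from (1), since $\boldsymbol{I}_{I_{n}}\boldsymbol{A}_{(n)}=\boldsymbol{A}_{(n)}$).

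I do not expect a real obstacle here: the whole mathematical content lies in the index bookkeeping of step (1), and steps (2)--(3) are one-liners once (1) is available. The subtlety worth stating explicitly is the consistency of the unfolding's multi-index-to-column map between a tensor and its mode-$n$ product, since the argument for (3) proceeds by reading both tensors off their mode-$n$ unfoldings.
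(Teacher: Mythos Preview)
Your argument is correct and is the standard proof of these well-known identities. Note, however, that the paper does not actually prove this lemma: it is stated with citations to \cite{Zeng2020, Kolda2009} and used as a background result, so there is no ``paper's own proof'' to compare against.
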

	\begin{mydef}[ {T}ensor rank\cite{De2000, Kilmer2013}]
	Let $\mathcal{A} \in \mathbb{R}^{I_{1} \times I_{2} \times I_{3} }$.\\
{\rm	(1)} The $n$-rank of $\mathcal{A}$, denoted by $\mathtt{rank}_{n}(\mathcal{A})$, is the dimension of the vector space spanned by all mode-$n$ fibers. For example, $\mathtt{rank}_{3}(\mathcal{A})=\mathtt{rank}(\boldsymbol{A}_{(3)})$.\\
{\rm	(2)} The multirank of  $\mathcal{A}$ is a mode-3 fiber  $\mathbf{rank}_{m}(\mathcal{A})\in \mathbb{R}^{ I_{3}}$ such that $\mathbf{rank}_{m}(\mathcal{A})(i)$ is the rank of  $\left( \mathcal{A} \times_{3} \boldsymbol{M}\right) (:,:,i)$ where $\boldsymbol{M}$  represents different meanings under different tensor products.
	\end{mydef}
The Tucker rank of tensor $\mathcal{A}$ is a vector with  its elements being the ranks of matrix unfoldings with
respect to the corresponding modes, i.e., $(\mathtt{rank}_{1}(\mathcal{A}), \mathtt{rank}_{2}(\mathcal{A}), \mathtt{rank}_{3}(\mathcal{A})).$
If $\boldsymbol{M}$ is the left singular matrix of $\boldsymbol{A}_{(3)}$ with $\boldsymbol{A}_{(3)}=\boldsymbol{U}^{(3)}\boldsymbol{S}^{(3)}\boldsymbol{V}^{(3)T},$  it is shown in \cite{Zeng2020} that
$$ \max{\mathbf{rank}_{m}(\mathcal{A})}=\max{\{\mathtt{rank}(( \mathcal{A} \times_{3} \boldsymbol{U}^{(3)T}) (:,:,i))}\} \leq \min{\{\mathtt{rank}_{1}(\mathcal{A}), \mathtt{rank}_{2}(\mathcal{A})}\}.$$

\section{O-SVD} \label{OSVDsection}
\hskip 2em We first review the O-SVD developed by Zeng and Ng \cite{Zeng2020}, which is built on the operations of tensors introduced in Section \ref{Preliminaries}. In order to better understand the O-SVD and improve its numerical realization, we show that the outer product form of the O-SVD is in fact the TTr1SVD introduced in \cite{constructiveBatselier2015}. We then proceed to a type of truncated O-SVD (TO-SVD) in an analogous manner to the  truncated T-SVD (TT-SVD) \cite{Kilmer2011} and develop a rigorous error analysis.

      \begin{mythm}[O-SVD\cite{Zeng2020}]\label{OSVD}
	Let $\mathcal{A} \in \mathbb{R}^{I_{1} \times I_{2} \times I_{3}}$ and  $R_{3}=\mathtt{rank}_{3}(\mathcal{A})$. There exists an orthogonal  matrix $\boldsymbol{U}^{(3)} \in \mathbb{R}^{I_{3} \times I_{3}}$, three tensors $\mathcal{U} \in \mathbb{R}^{I_{1} \times I_{1} \times  I_{3}}$, $\mathcal{S}\in \mathbb{R}^{I_{1} \times I_{2} \times  I_{3}}$, $\mathcal{V} \in \mathbb{R}^{I_{2} \times I_{2} \times  I_{3}}$ such that
	\begin{equation}\label{A}
	\mathcal{A}=\left(\mathcal{U} *_{3} \mathcal{S} *_{3} \mathcal{V}\right) \times_{3} \boldsymbol{U}^{(3)},
	\end{equation}
	where\\
{\rm 	(1)} $\mathcal{U}(:,:, i)$, $\mathcal{V}(:,:, i)$ are orthogonal and~$\mathcal{S}(:,:, i)$ is a nonnegative diagonal matrix for $i=1,2, \ldots, R_{3}$;\\
{\rm 	(2)} $\mathcal{U}(:,:, i)$, $\mathcal{V}(:,:, i)$ and $\mathcal{S}(:,:, i)$ are all zero matrices for $i=R_{3}+1, \ldots, I_{3}$.
	\end{mythm}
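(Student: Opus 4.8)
The plan is to construct the decomposition explicitly by composing two layers of ordinary matrix SVDs: one applied to the mode-3 unfolding to produce the rotation $\boldsymbol{U}^{(3)}$, and then one applied to each frontal slice of the rotated tensor to produce $\mathcal{U}$, $\mathcal{S}$, $\mathcal{V}$.

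First I would take a full SVD $\boldsymbol{A}_{(3)} = \boldsymbol{U}^{(3)} \boldsymbol{S}^{(3)} \boldsymbol{V}^{(3)T}$ of the mode-3 unfolding, with $\boldsymbol{U}^{(3)} \in \mathbb{R}^{I_3 \times I_3}$ orthogonal. Since $\mathtt{rank}(\boldsymbol{A}_{(3)}) = \mathtt{rank}_3(\mathcal{A}) = R_3$, the matrix $\boldsymbol{S}^{(3)}$ has exactly $R_3$ nonzero diagonal entries, so every row of $\boldsymbol{S}^{(3)} \boldsymbol{V}^{(3)T}$ beyond the $R_3$-th vanishes. Set $\mathcal{B} := \mathcal{A} \times_3 \boldsymbol{U}^{(3)T}$; by Lemma \ref{gaixie}(1), $\boldsymbol{B}_{(3)} = \boldsymbol{U}^{(3)T} \boldsymbol{A}_{(3)} = \boldsymbol{S}^{(3)} \boldsymbol{V}^{(3)T}$. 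Because the $i$-th row of a mode-3 unfolding is precisely the vectorization of the $i$-th frontal slice, this forces $\mathcal{B}(:,:,i) = \boldsymbol{0}$ for $i = R_3+1, \ldots, I_3$.

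Next, for each $i = 1, \ldots, R_3$ I would compute a \emph{full} matrix SVD of the slice, $\mathcal{B}(:,:,i) = \boldsymbol{U}_i \boldsymbol{\Sigma}_i \boldsymbol{V}_i^T$, with $\boldsymbol{U}_i \in \mathbb{R}^{I_1 \times I_1}$ and $\boldsymbol{V}_i \in \mathbb{R}^{I_2 \times I_2}$ orthogonal and $\boldsymbol{\Sigma}_i \in \mathbb{R}^{I_1 \times I_2}$ nonnegative diagonal, and set $\mathcal{U}(:,:,i) = \boldsymbol{U}_i$, $\mathcal{S}(:,:,i) = \boldsymbol{\Sigma}_i$, $\mathcal{V}(:,:,i) = \boldsymbol{V}_i^T$; for $i = R_3+1, \ldots, I_3$ I take all three slices to be zero, which is property (2), while property (1) holds by construction (transposition preserves orthogonality). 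By Definition \ref{d1}, $(\mathcal{U} *_3 \mathcal{S} *_3 \mathcal{V})(:,:,i) = \mathcal{U}(:,:,i)\,\mathcal{S}(:,:,i)\,\mathcal{V}(:,:,i) = \mathcal{B}(:,:,i)$ for every $i$ (trivially for $i > R_3$), hence $\mathcal{U} *_3 \mathcal{S} *_3 \mathcal{V} = \mathcal{B}$. Finally, by Lemma \ref{gaixie}(3) and the orthogonality of $\boldsymbol{U}^{(3)}$, $(\mathcal{U} *_3 \mathcal{S} *_3 \mathcal{V}) \times_3 \boldsymbol{U}^{(3)} = \mathcal{B} \times_3 \boldsymbol{U}^{(3)} = (\mathcal{A} \times_3 \boldsymbol{U}^{(3)T}) \times_3 \boldsymbol{U}^{(3)} = \mathcal{A} \times_3 (\boldsymbol{U}^{(3)} \boldsymbol{U}^{(3)T}) = \mathcal{A}$, which is \eqref{A}.

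The argument is linear algebra plus careful bookkeeping, so there is no single hard step; the place to be careful is the translation between the unfolding and the slices — one must fix the unfolding convention so that a zero row of $\boldsymbol{S}^{(3)} \boldsymbol{V}^{(3)T}$ corresponds to a zero frontal slice of $\mathcal{B}$ — and one must insist on the full (square-factor) slice SVDs so that $\mathcal{U}(:,:,i)$ and $\mathcal{V}(:,:,i)$ are genuinely square orthogonal matrices of the claimed sizes $I_1 \times I_1$ and $I_2 \times I_2$ rather than merely matrices with orthonormal columns. The transpose bookkeeping between $\mathcal{V}(:,:,i)$ and the right singular factor is cosmetic.
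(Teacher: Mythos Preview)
Your proposal is correct and mirrors the construction the paper itself outlines. The paper does not give a standalone proof of this theorem (it is quoted from \cite{Zeng2020}), but Section~\ref{gaijin} describes exactly the two-step procedure you use: take the SVD of $\boldsymbol{A}_{(3)}$ to obtain $\boldsymbol{U}^{(3)}$, form $\widetilde{\mathcal{A}}=\mathcal{A}\times_3 \boldsymbol{U}^{(3)T}$, and then compute a matrix SVD of each nonzero frontal slice; your bookkeeping with Lemma~\ref{gaixie} and Definition~\ref{d1} simply makes this rigorous.
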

The diagonal elements $ s_{jji}$ of  each  frontal slice of  $\mathcal{S}$ are called the singular values of the pair $(\mathcal{A},\mathcal{S})$.
	\begin{mythm}[\cite{Zeng2020} ]\label{dengyu}
	Let the core tensor corresponding to the O-SVD of $\mathcal{A} \in \mathbb{R}^{I_{1} \times I_{2} \times I_{3}}$ be $\mathcal{S}$. Then each slice $\mathcal{S}(:,:, i) \in \mathbb{R}^{I_{1} \times I_{2}}$ has the following property:\\
	$$\left\|\mathcal{S}(:,:, 1)\right\|_{F} \geq\left\|\mathcal{S}(:,:, 2)\right\|_{F} \geq \cdots \geq\left\|S\left(:,:, I_{3}\right)\right\|_{F} \geq 0,$$
	where $\left\|\mathcal{S}(:,:, i)\right\|_{F}=\sigma_{i} $ with $\sigma_{i}$ being the $i$th largest singular value of $\boldsymbol{A}_{(3)}$ and	
	$$\sigma_{i} \geq \mathcal{S}(1,1, i) \geq \mathcal{S}(2,2, i) \geq \cdots \geq \mathcal{S}(r_{2}, r_{2}, i) \geq 0 $$	
	for $i=1, 2,\ldots, r_{1}$, where $r_{1}=\min \left\lbrace I_{3}, I_{1} I_{2}\right\rbrace $, $r_{2}=\min \left\{I_{1}, I_{2}\right\}$.
	\end{mythm}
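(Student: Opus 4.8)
The plan is to trace through the construction of the O-SVD in Theorem~\ref{OSVD} and to read off both inequality chains from the matrix SVD of the frontal slices of an auxiliary tensor together with the singular values of the mode-$3$ unfolding $\boldsymbol{A}_{(3)}$.

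First I would recall the construction behind Theorem~\ref{OSVD}. Writing $\boldsymbol{A}_{(3)}=\boldsymbol{U}^{(3)}\boldsymbol{S}^{(3)}\boldsymbol{V}^{(3)T}$ for the SVD, one forms $\mathcal{B}=\mathcal{A}\times_{3}\boldsymbol{U}^{(3)T}$ and then takes, for each $i$, the matrix SVD $\mathcal{B}(:,:,i)=\mathcal{U}(:,:,i)\,\mathcal{S}(:,:,i)\,\mathcal{V}(:,:,i)^{T}$ of the $i$th frontal slice, with the diagonal entries of $\mathcal{S}(:,:,i)$ ordered nonincreasingly. By Lemma~\ref{gaixie}(1), $\boldsymbol{B}_{(3)}=\boldsymbol{U}^{(3)T}\boldsymbol{A}_{(3)}=\boldsymbol{S}^{(3)}\boldsymbol{V}^{(3)T}$, so the $i$th row of $\boldsymbol{B}_{(3)}$ equals $\sigma_{i}\boldsymbol{v}_{i}^{T}$, where $\boldsymbol{v}_{i}$ is the $i$th right singular vector of $\boldsymbol{A}_{(3)}$. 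The key bookkeeping step is that the $i$th row of $\boldsymbol{B}_{(3)}$ is precisely the vectorization of the frontal slice $\mathcal{B}(:,:,i)$; since $\|\boldsymbol{v}_{i}\|_{2}=1$ this gives $\|\mathcal{B}(:,:,i)\|_{F}=\sigma_{i}$ (with the convention $\sigma_{i}=0$ once $i$ exceeds $\mathtt{rank}_{3}(\mathcal{A})$).

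Next I would invoke orthogonality of $\mathcal{U}(:,:,i)$ and $\mathcal{V}(:,:,i)$, so that multiplying by them preserves the Frobenius norm; hence $\|\mathcal{S}(:,:,i)\|_{F}=\|\mathcal{B}(:,:,i)\|_{F}=\sigma_{i}$, and the monotonicity $\sigma_{1}\geq\sigma_{2}\geq\cdots\geq 0$ of the singular values of $\boldsymbol{A}_{(3)}$ yields the first chain. For the second chain, the diagonal entries of $\mathcal{S}(:,:,i)$ are by construction the singular values of $\mathcal{B}(:,:,i)$ in nonincreasing order, which gives $\mathcal{S}(1,1,i)\geq\mathcal{S}(2,2,i)\geq\cdots\geq\mathcal{S}(r_{2},r_{2},i)\geq 0$ with $r_{2}=\min\{I_{1},I_{2}\}$; and the remaining bound follows from $\mathcal{S}(1,1,i)=\|\mathcal{B}(:,:,i)\|_{2}\leq\|\mathcal{B}(:,:,i)\|_{F}=\sigma_{i}$ for $i=1,\ldots,r_{1}$.

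I expect the only genuinely delicate point to be the norm bookkeeping that identifies the $i$th row of the mode-$3$ unfolding $\boldsymbol{B}_{(3)}$ with the vectorized frontal slice $\mathcal{B}(:,:,i)$ and tracks how the diagonal matrix $\boldsymbol{S}^{(3)}$ distributes the singular values $\sigma_{i}$ across those rows; once this is pinned down, the inequalities are immediate consequences of the orthogonal invariance of $\|\cdot\|_{F}$ and the elementary bound $\|\cdot\|_{2}\leq\|\cdot\|_{F}$ applied to the frontal slices.
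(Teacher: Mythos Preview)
Your argument is correct. The paper does not give its own proof of this theorem (it is quoted from \cite{Zeng2020}), but your derivation matches exactly the mechanism made explicit in Section~\ref{gaijin}: equations~(\ref{di2bu})--(\ref{chengji}) record that $\widetilde{\mathcal{A}}(:,:,i)=\sigma_i\widetilde{\boldsymbol{V}}_i$ with $\|\widetilde{\boldsymbol{V}}_i\|_F=1$ and $s_{jji}=\sigma_i\sigma_{ij}$, from which $\|\mathcal{S}(:,:,i)\|_F=\sigma_i$ and $\mathcal{S}(1,1,i)=\sigma_i\sigma_{i1}\le\sigma_i$ follow just as you show. One small notational mismatch: in the paper's convention $\mathcal{V}(:,:,i)$ already stores the transpose (see~(\ref{chengji})), so the slice factorization reads $\mathcal{U}(:,:,i)\,\mathcal{S}(:,:,i)\,\mathcal{V}(:,:,i)$ without an extra ${}^T$; this does not affect your norm computations.
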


\subsection{O-SVD and TTr1SVD}\label{gaijin}
\hskip 2em It follows from Theorem \ref{OSVD} that there are two steps in the computational procedure of the O-SVD. The first step is to find a basis of the space spanned by the frontal slices of $\mathcal{A}$. Specifically, one needs to conduct the “economical” SVD of the $I_{3} \times I_{1} I_{2}$ matrix $\boldsymbol{A}_{(3)}$
\begin{equation*}\label{di1bu}
\boldsymbol{A}_{(3)}=\boldsymbol{U}^{(3)}\boldsymbol{S}^{(3)}\boldsymbol{V}^{(3)T},
\end{equation*}
where  the number of non-zero singular values obtained is equal to the number of  the desired basis. Then, the frontal slices of $\widetilde{\mathcal{A}}= \mathcal{A} \times_{3}\boldsymbol{U}^{(3)T}$ are the basis we are looking for.  Motivated by the observation that
\begin{equation}\label{di2bu}
\widetilde{\mathcal{A}}(:,:,i)=\sigma_{i}\widetilde{\boldsymbol{V}}\mathclap{_{i}}\,=\sigma_{i}\mathtt{reshape}(\boldsymbol{V}^{(3)}(:,i),\left[I_{1},I_{2} \right] ),
\end{equation}
where the operator $\mathtt{reshape}$  returns the $I_{1}$-by-$I_{2}$ matrix $\widetilde{\boldsymbol{V}}\mathclap{_{i}}$ whose elements are taken columnwise from $\boldsymbol{V}^{(3)}(:,i)$, we can compute the SVD directly for each matrix $\widetilde{\boldsymbol{V}}\mathclap{_{i}}$
\begin{equation}\label{di3bu}
\widetilde{\boldsymbol{V}}\mathclap{_{i}}=\boldsymbol{U}\mathclap{_{i}}\,\boldsymbol{S}_{i}\boldsymbol{V}\mathclap{_{i}}^{T}
\end{equation}
instead of forming $\widetilde{\mathcal{A}}$ first. This procedure is directly inspired by the algorithm of TTr1 decomposition \cite{constructiveBatselier2015}, which requires recursively reshaping the right singular vectors $\boldsymbol{V}^{(3)}(:,i)$, and computing their SVDs. This algorithm is called TTr1SVD and gives rise to the formation of a tree. Since the first step of the TTr1SVD algorithm is to expand the tensor along the selected mode, the O-SVD is in fact the  TTr1SVD for the third-order oriented tensors with the processing order $\rho = [3, 1, 2]$. Let $\sigma_{ij}$ denote the $j$th largest singular value of $\widetilde{\boldsymbol{V}}\mathclap{_{i}}$  where $i=1,2,\ldots, r_{1} $,  $j=1,2,\ldots, r_{2}$. Substituting (\ref{di3bu}) into  (\ref{di2bu}), it is easy to derive that
 \begin{equation}\label{chengji}
 s_{jji}=\sigma_{i}\sigma_{ij}, \, \mathcal{U}(:,:, i)=\boldsymbol{U}\mathclap{_{i}}\,, \mathcal{V}(:,:, i)=\boldsymbol{V}\mathclap{_{i}}^{T}.
 \end{equation}
 Since any matrix can be written  as a sum of rank-$1$ terms, we can also rewrite $\mathcal{A}$ as
 \begin{equation}\label{zhiyi}
 \mathcal{A}=\sum_{i=1}^{r_{1}}\sum_{j=1}^{r_{2}} s_{jji} \times_{1} \boldsymbol{u}_{ij}\times_{2} \boldsymbol{v}_{ij} \times_{3}\boldsymbol{u}_{i},
 \end{equation}
 where  $\boldsymbol{u}_{i}$, $\boldsymbol{u}_{ij}$, $\boldsymbol{v}_{ij}$ are the column vectors of $\boldsymbol{U}^{(3)}$, $\boldsymbol{U}\mathclap{_{i}}\,$, $\boldsymbol{V}\mathclap{_{i}}$ respectively. Consequently, the O-SVD  also has three main features that render it similar to the matrix SVD.
\begin{mycor}[\cite{constructiveBatselier2015}]
Let (\ref{zhiyi}) be the outer product form of the O-SVD of  $\mathcal{A} \in \mathbb{R}^{I_{1} \times I_{2} \times I_{3}}$, and the number of rank-1 terms  be  $R=r_{1}r_{2}$. Then,\\
{\rm (1)} the  scalars $ s_{jji}$ are the weights of the outer products in the decomposition,\\
{\rm (2)} the  outer products affiliated with each singular value are tensors of unit Frobenius norm, since each product vector (or mode vector) is a unit vector, and  \\
{\rm (3)} each  outer product in the decomposition is orthogonal to all the others.
\end{mycor}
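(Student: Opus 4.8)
The plan is to reduce all three claims to one elementary fact --- that the Euclidean inner product of two third-order rank-1 tensors factors as the product of three vector inner products --- and then read off the conclusions from the orthonormality of the SVD factors recorded in (\ref{chengji}).

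Claim (1) is immediate: by the definition of the outer product, the $(i,j)$ summand in (\ref{zhiyi}) is the rank-1 tensor $s_{jji}\,(\boldsymbol{u}_{ij}\circ\boldsymbol{v}_{ij}\circ\boldsymbol{u}_{i})$, so the scalar $s_{jji}=\sigma_{i}\sigma_{ij}$ is precisely its weight. For the remaining two claims I would first establish the auxiliary identity
\begin{equation*}
\langle\, \boldsymbol{a}\circ\boldsymbol{b}\circ\boldsymbol{c},\ \boldsymbol{a}'\circ\boldsymbol{b}'\circ\boldsymbol{c}'\,\rangle=(\boldsymbol{a}^{T}\boldsymbol{a}')(\boldsymbol{b}^{T}\boldsymbol{b}')(\boldsymbol{c}^{T}\boldsymbol{c}'),
\end{equation*}
valid for any vectors of matching dimensions, which follows by expanding the triple sum that defines the tensor inner product and factoring it. Setting the primed vectors equal to the unprimed ones yields $\|\boldsymbol{a}\circ\boldsymbol{b}\circ\boldsymbol{c}\|_{F}=\|\boldsymbol{a}\|_{2}\|\boldsymbol{b}\|_{2}\|\boldsymbol{c}\|_{2}$.

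For claim (2), recall from (\ref{chengji}) and the discussion around (\ref{zhiyi}) that $\boldsymbol{u}_{ij}$, $\boldsymbol{v}_{ij}$, $\boldsymbol{u}_{i}$ are columns of the orthogonal matrices $\boldsymbol{U}_{i}=\mathcal{U}(:,:,i)$, $\boldsymbol{V}_{i}$ and $\boldsymbol{U}^{(3)}$, respectively, hence each is a unit vector; the norm identity above then gives $\|\boldsymbol{u}_{ij}\circ\boldsymbol{v}_{ij}\circ\boldsymbol{u}_{i}\|_{F}=1$. For claim (3), take two distinct pairs $(i,j)\ne(i',j')$ and apply the auxiliary identity to the corresponding outer products. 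If $i\ne i'$, then $\boldsymbol{u}_{i}^{T}\boldsymbol{u}_{i'}=0$ since the columns of the orthogonal matrix $\boldsymbol{U}^{(3)}$ are orthonormal, so the inner product vanishes. If $i=i'$ but $j\ne j'$, then $\boldsymbol{u}_{ij}^{T}\boldsymbol{u}_{ij'}=0$ because the columns of $\boldsymbol{U}_{i}$ are orthonormal (and likewise $\boldsymbol{v}_{ij}^{T}\boldsymbol{v}_{ij'}=0$), so again the inner product vanishes. Hence distinct outer products in (\ref{zhiyi}) are mutually orthogonal.

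There is no serious obstacle here; the only point requiring a little care in claim (3) is the case split, together with keeping straight which SVD --- the ``outer'' one of $\boldsymbol{A}_{(3)}$ or the ``inner'' one of the reshaped right singular vector --- supplies each orthogonality relation. Everything else is bookkeeping with the orthonormality already built into Theorem \ref{OSVD} and the definitions in (\ref{chengji}).
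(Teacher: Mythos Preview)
Your argument is correct and is exactly the standard one. The paper itself does not supply a proof of this corollary; it simply records the statement with a citation to \cite{constructiveBatselier2015}, so there is nothing to compare against beyond noting that your factorisation-of-inner-products identity and the $i\ne i'$ versus $i=i',\,j\ne j'$ case split constitute precisely the intended reasoning.
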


\hskip 2em Furthermore, we can obtain a more economical expression, similar to the form of the  $\left( L_{r}, L_{r}, 1\right)$-term decomposition \cite{optimizationsorber2013}.
\begin{mycor}\label{block}
	Let (\ref{A}) be the O-SVD of  $\mathcal{A} \in \mathbb{R}^{I_{1} \times I_{2} \times I_{3}}$. Then\\
	\begin{equation}
\mathcal{A}=\sum_{r=1}^{R_{3}} \boldsymbol{H}\mathclap{_{r}} \,\text{\tiny{$\otimes$}} \boldsymbol{u}_{r}, \quad \mathtt{rank}\left(\boldsymbol{H}\mathclap{_{r}}\,\right)=L_{r}>0 \, \text { for } 1 \leq r \leq R_{3},	
	\end{equation}
 where $\boldsymbol{H}\mathclap{_{r}}=\widetilde{\mathcal{A}}(:,:,r)=\mathcal{U}(:,:,r) \mathcal{S}(:,:,r)\mathcal{V}(:,:,r)$,  $\boldsymbol{u}_{r}=\boldsymbol{U}^{(3)}(:,r)$, and $\text{\tiny{$\otimes$}}$  is the tensor product defined by $(H \text{\tiny{$\otimes$}} \boldsymbol{u})(i, j, k)= h_{ij}u_{k}$.
\end{mycor}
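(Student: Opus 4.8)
The plan is to unwind the definitions in (\ref{A}) and collapse the three tensor--tensor products into a single slicewise matrix product, then rewrite the remaining mode-$3$ product as a sum of $\otimes$ products. First I would set $\widetilde{\mathcal{A}} = \mathcal{U} *_3 \mathcal{S} *_3 \mathcal{V}$, so that by Definition \ref{d1} its $r$-th frontal slice is exactly $\widetilde{\mathcal{A}}(:,:,r) = \mathcal{U}(:,:,r)\,\mathcal{S}(:,:,r)\,\mathcal{V}(:,:,r) = \boldsymbol{H}_r$, and (\ref{A}) becomes $\mathcal{A} = \widetilde{\mathcal{A}} \times_3 \boldsymbol{U}^{(3)}$. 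Writing this mode-$3$ product entrywise via Definition \ref{d2} gives $a_{ijk} = \sum_{m=1}^{I_3} \widetilde{a}_{ijm}\, u^{(3)}_{km}$, where $u^{(3)}_{km}$ denotes the $(k,m)$-entry of $\boldsymbol{U}^{(3)}$, i.e. the $k$-th component of $\boldsymbol{u}_m = \boldsymbol{U}^{(3)}(:,m)$.

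Next I would truncate the sum. By part (2) of Theorem \ref{OSVD}, the slices $\mathcal{U}(:,:,m)$, $\mathcal{S}(:,:,m)$, $\mathcal{V}(:,:,m)$ all vanish for $m = R_3+1,\dots,I_3$, hence $\boldsymbol{H}_m = \widetilde{\mathcal{A}}(:,:,m) = \boldsymbol{0}$ for those indices, and the sum collapses to $a_{ijk} = \sum_{m=1}^{R_3} (\boldsymbol{H}_m)_{ij}(\boldsymbol{u}_m)_k = \sum_{m=1}^{R_3} (\boldsymbol{H}_m \otimes \boldsymbol{u}_m)(i,j,k)$ by the defining relation $(H \otimes \boldsymbol{u})(i,j,k) = h_{ij}u_k$. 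Renaming $m$ as $r$ yields the claimed decomposition $\mathcal{A} = \sum_{r=1}^{R_3} \boldsymbol{H}_r \otimes \boldsymbol{u}_r$.

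It then remains to verify the rank statement. For $1 \le r \le R_3$ the matrices $\mathcal{U}(:,:,r)$ and $\mathcal{V}(:,:,r)$ are orthogonal by part (1) of Theorem \ref{OSVD}, hence nonsingular, so $\mathtt{rank}(\boldsymbol{H}_r) = \mathtt{rank}(\mathcal{S}(:,:,r))$, which equals the number $L_r$ of nonzero diagonal entries of the nonnegative diagonal matrix $\mathcal{S}(:,:,r)$. To see $L_r > 0$ I would invoke Theorem \ref{dengyu}: $\|\mathcal{S}(:,:,r)\|_F = \sigma_r$, the $r$-th largest singular value of $\boldsymbol{A}_{(3)}$; since $R_3 = \mathtt{rank}_3(\mathcal{A}) = \mathtt{rank}(\boldsymbol{A}_{(3)})$, we have $\sigma_r > 0$ whenever $r \le R_3$, which forces $\mathcal{S}(:,:,r) \ne \boldsymbol{0}$ and hence $L_r \ge 1$.

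I do not anticipate a genuine obstacle here: the argument is essentially bookkeeping. The only point requiring care is keeping the index conventions of $*_3$ (Definition \ref{d1}), the mode-$3$ product (Definition \ref{d2}), and the new product $\otimes$ mutually consistent, and making sure the reduction from $I_3$ to $R_3$ summands is justified purely by the support statement in Theorem \ref{OSVD}(2) and the positivity of $\sigma_r$ from Theorem \ref{dengyu}, rather than by any extra nondegeneracy assumption on $\mathcal{A}$.
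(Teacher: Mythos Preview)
Your argument is correct. The paper states this corollary without proof, treating it as an immediate consequence of the preceding discussion (equations (\ref{di2bu})--(\ref{zhiyi}) and Theorem~\ref{OSVD}); your write-up supplies exactly the routine verification the paper omits, and the use of Theorem~\ref{dengyu} together with $R_3=\mathtt{rank}(\boldsymbol{A}_{(3)})$ to secure $L_r>0$ is the right way to close the rank claim.
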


\subsection{TO-SVD and  its  Error  Analysis}
  \hskip 2em For an orientation-dependent tensor $\mathcal{A} \in \mathbb{R}^{I_{1} \times I_{2} \times I_{3}}$, the numerical rank of $\boldsymbol{A}_{(3)}$ is usually much smaller than $I_3$.
  In this circumstance, we can utilize the truncation strategy to efficiently compute an approximate O-SVD.
  The Eckart-Young theorem \cite{Eckart1936} states that an optimal rank-$k$ approximation to a matrix  can be constructed using the rank-$k$ truncated SVD.
  Similarly, an $r$-term approximation for the O-SVD can be obtained by truncating (\ref{zhiyi}) to the first $r$ terms.
\begin{mylem}[Approximation \cite{Zeng2020,constructiveBatselier2015}]\label{Rxiang}
Let $\mathcal{A} \in \mathbb{R}^{I_{1} \times I_{2} \times I_{3}}$ and denote $\widetilde{\sigma_{i}}$ as the $i$th singular value of the pair $(\mathcal{A},\mathcal{S})$ in descending order. Denote by $\mathcal{A}_{r}$ the $r$-term approximation by the O-SVD. Then we have
\begin{equation}
\left\|\mathcal{A}-\mathcal{A}_{r}\right\|_{F}^{2}=\sum_{i=r+1}^{r_{1}r_{2}} \widetilde{\sigma_{i}}^{2}.
\end{equation}
 \end{mylem}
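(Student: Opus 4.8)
The plan is to exploit the orthogonality of the rank-1 outer products in the decomposition~(\ref{zhiyi}), which is exactly the content of the corollary immediately following it: the $R = r_1 r_2$ tensors $\bigl(\times_1 \boldsymbol{u}_{ij} \times_2 \boldsymbol{v}_{ij} \times_3 \boldsymbol{u}_i\bigr)$ are pairwise orthogonal with unit Frobenius norm, and the scalars $s_{jji}$ are their weights. Writing $\mathcal{A} = \sum_{m=1}^{R} \widetilde{\sigma}_m \mathcal{T}_m$ where $\{\mathcal{T}_m\}$ is this orthonormal system and $\widetilde{\sigma}_1 \geq \widetilde{\sigma}_2 \geq \cdots$ is the reordering of the $s_{jji}$ in descending order (so $\mathcal{A}_r = \sum_{m=1}^{r} \widetilde{\sigma}_m \mathcal{T}_m$), the difference $\mathcal{A} - \mathcal{A}_r = \sum_{m=r+1}^{R} \widetilde{\sigma}_m \mathcal{T}_m$ is again a combination of the same orthonormal tensors.

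First I would invoke the Pythagorean identity for the Frobenius norm, which follows from the definition of the tensor inner product: for any orthogonal family $\{\mathcal{T}_m\}$ one has $\bigl\| \sum_m c_m \mathcal{T}_m \bigr\|_F^2 = \sum_m c_m^2 \|\mathcal{T}_m\|_F^2$. Applying this to $\mathcal{A} - \mathcal{A}_r$ and using $\|\mathcal{T}_m\|_F = 1$ gives
\begin{equation*}
\left\| \mathcal{A} - \mathcal{A}_r \right\|_F^2 = \sum_{m=r+1}^{R} \widetilde{\sigma}_m^2 = \sum_{i=r+1}^{r_1 r_2} \widetilde{\sigma_i}^2,
\end{equation*}
which is the claimed formula. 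The only remaining bookkeeping is to check that the index set $\{1, \dots, R\} = \{1, \dots, r_1 r_2\}$ and that truncating~(\ref{zhiyi}) to its $r$ largest-weight terms is precisely the definition of $\mathcal{A}_r$ being used; both are immediate from the setup in Section~\ref{gaijin} and from Theorem~\ref{dengyu}, which guarantees the $s_{jji}$ can indeed be sorted.

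The step requiring the most care is verifying the orthogonality claim in full, i.e.\ that $\langle \mathcal{T}_m, \mathcal{T}_{m'} \rangle = \delta_{mm'}$ for the rank-1 tensors indexed by pairs $(i,j)$. For two terms with the same outer index $i$, orthogonality reduces to $\boldsymbol{u}_{ij}^T \boldsymbol{u}_{ij'} \, \boldsymbol{v}_{ij}^T \boldsymbol{v}_{ij'} = \delta_{jj'}$, which holds because $\boldsymbol{U}_i$ and $\boldsymbol{V}_i$ have orthonormal columns (they come from the SVD~(\ref{di3bu}) of $\widetilde{\boldsymbol{V}}_i$). For two terms with distinct outer indices $i \neq i'$, the mode-3 factors $\boldsymbol{u}_i$ and $\boldsymbol{u}_{i'}$ are distinct columns of the orthogonal matrix $\boldsymbol{U}^{(3)}$, so their inner product already vanishes and kills the whole product regardless of the other factors. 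Since this orthogonality is exactly what the preceding corollary (cited from~\cite{constructiveBatselier2015}) asserts, I would simply cite it rather than re-derive it, making the proof of the lemma a two-line consequence of the Pythagorean identity. The main obstacle, then, is not mathematical depth but ensuring the identification between the doubly-indexed weights $s_{jji}$ and the singly-indexed sorted sequence $\widetilde{\sigma_i}$ is stated cleanly enough that the index range on the right-hand sum is unambiguous.
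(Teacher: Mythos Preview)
Your proof is correct. Note, however, that the paper does not supply its own proof of this lemma: it is stated with citations to \cite{Zeng2020,constructiveBatselier2015} and left unproved, serving only as a baseline against which the new truncation in Theorem~\ref{best} is compared. Your argument---invoking the orthonormality of the rank-1 terms from the preceding corollary and applying the Pythagorean identity to the tail $\sum_{m>r}\widetilde\sigma_m\mathcal{T}_m$---is exactly the standard route (and is the argument given in \cite{constructiveBatselier2015}), so there is nothing to contrast.
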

Notice that the $r$-term approximation of $\mathcal{A}$ requires reordering all the singular values, finding the outer product corresponding to each singular value and adding them one by one, which is undoubtedly laborious and time-consuming. At the same time, the original structures of Theorem \ref{OSVD} and Corollary \ref{block} cannot be maintained.

\hskip 2em To overcome this drawback, we adopt an alternative truncation strategy.
 Recall that the factor matrix of the truncated HOSVD (THOSVD)\cite{newvannieuwenhoven2012} is obtained from a truncated SVD of the mode-$k$ unfolding of the tensor. For the TT-SVD, it consists of transforming the tensor to the Fourier domain and applying the truncated SVD to each frontal slice of the tensor. Following the procedure of the O-SVD, we consider a truncation method combining the ideas of TT-SVD and THOSVD.
We first perform a truncated SVD of $\boldsymbol{A}_{(3)}$ to get the approximate matrix $\boldsymbol{U}_{k_{1}}^{(3)}$ of the left singular matrix $\boldsymbol{U}^{(3)}$, where $k_1$ is the target truncation rank. Secondly, for each frontal slice of $\mathcal{A} \times_{3}(\boldsymbol{U}_{k_1}^{(3)})^{T}$, we conduct the economical SVD with different target truncation terms. Let $\boldsymbol{k}_{2}=\left[k_{21}, \ldots, k_{2k_{1}}\right] ^{T}$ be the target multirank of the second step. Obviously, this kind of truncation leads to different nonzero blocks in each frontal slice of $\mathcal{S}$, and so do $\mathcal{U}$ and $\mathcal{V}$. For the convenience of description, we set $k_2 = \max\{k_{21}, \ldots, k_{2k_{1}}\}$. Now we are ready to summarize the above discussion in the following definition.

	\begin{mydef}[$\boldsymbol{k}$-term TO-SVD]
	Given a tensor $\mathcal{A} \in \mathbb{R}^{I_{1} \times I_{2} \times I_{3}}$, define the truncation of the O-SVD to $\boldsymbol{k}$ terms of $\mathcal{A}$ as
	\begin{equation}\label{AK}
	\mathcal{A}_{\boldsymbol{k}}=\left( \mathcal{U}_{k_{2}} *_{3}\mathcal{S}_{k_{2}}*_{3}\mathcal{V}_{k_{2}}\right)  \times_{3} \boldsymbol{U}_{k_{1}}^{(3)},
	\end{equation}
	where \\
	{\rm (1)} $\boldsymbol{k}=\left[ k_{1}; \boldsymbol{k}_{2}\right]\in \mathbb{R}^{k_{1}+1 }$ is the target rank vector; \\
	{\rm (2)}  $\mathcal{U}_{k_{2}}(:,:,i) \in \mathbb{R}^{I_{1} \times k_{2}}$ and $\mathcal{V}^{T}_{k_{2}}(:,:,i)\in \mathbb{R}^{I_{2} \times k_{2}}$ have $k_{2i}$ orthogonal columns for $i=1,2,\ldots,k_{1}$. $\mathcal{S}_{k_{2}}(:,:,i) \in \mathbb{R}^{k_{2} \times k_{2}}$ is a nonnegative diagonal matrix for $i=1,2,\ldots,k_{1}$;\\
	{\rm (3)} $\boldsymbol{U}_{k_{1}}^{(3)}=\boldsymbol{U}^{(3)}(:,1:k_{1}) \in \mathbb{R}^{I_{3} \times k_{1}}$.
	\end{mydef}

	\hskip 2em  In Algorithm \ref{TO-SVD}, we show how the TO-SVD can be implemented in combination with the improved methods mentioned in Subsection \ref{gaijin}. The error of the TO-SVD is presented in Theorem \ref{best}.

	\begin{algorithm}[htb]
	\caption{$\boldsymbol{k}$-term TO-SVD}
	\label{TO-SVD}
	\begin{algorithmic}[1] 
	\REQUIRE $\mathcal{A}\in \mathbb{R}^{I_{1} \times I_{2} \times I_{3} }$, target truncation vector $\boldsymbol{k}=\left[ k_{1}; \boldsymbol{k}_{2} \right]$, $\boldsymbol{k}_{2}=\left[k_{21}, \ldots, k_{2k_{1}}\right] ^{T}$,
	\ENSURE	$\boldsymbol{U}_{k_{1}}^{(3)} \in \mathbb{R}^{I_{3} \times k_{1}}$, $\mathcal{U}_{k_{2}}\in \mathbb{R}^{I_{1} \times k_{2} \times k_{1} }$, $\mathcal{S}_{k_{2}}\in \mathbb{R}^{k_{2} \times k_{2} \times k_{1} }$, $\mathcal{V}_{k_{2}}\in \mathbb{R}^{k_{2} \times I_{2} \times k_{1} }$
	\STATE Initialization: $\mathcal{U}_{k_{2}}$, $\mathcal{S}_{k_{2}}$, $\mathcal{V}_{k_{2}}$ are zero tensors of appropriate size;
	\STATE$\left[ \boldsymbol{U}_{k_{1}}^{(3)},\boldsymbol{S}_{k_{1}}^{(3)},\boldsymbol{V}\mathclap{_{k_{1}}}^{(3)}\right]=\mathrm{svds}\left( \boldsymbol{A}_{(3)}, k_{1}\right)$;
	\STATE for $i=1,2,\ldots ,k_{1}$, do
	\STATE \quad $\widetilde{\boldsymbol{V}}\mathclap{_{i}}=\mathtt{reshape}(\boldsymbol{V}\mathclap{_{k_{1}}}^{(3)}(:,i),\left[I_{1},I_{2} \right] )$;
	\STATE\quad $\left[\boldsymbol{U}\mathclap{_{i}}\,,\boldsymbol{S}_{i},\boldsymbol{V}\mathclap{_{i}}\,\right]=\mathrm{svds}(\widetilde{\boldsymbol{V}}\mathclap{_{i}}\,,k_{2i})$;
	\STATE\quad $\mathcal{U}_{k_{2}}(:,1:k_{2i},i)=\boldsymbol{U}\mathclap{_{i}}\,, ~\mathcal{S}_{k_{2}}(1:k_{2i},1:k_{2i},i)=\boldsymbol{S}_{k_{1}}^{(3)}(i,i)\boldsymbol{S}_{i}, ~\mathcal{V}_{k_{2}}(1:k_{2i},:,i)=\boldsymbol{V}\mathclap{_{i}}^{T}$;
	\STATE end for	
	\end{algorithmic}
	\end{algorithm}
	\begin{mythm}\label{best}
	Let $\mathcal{A}_{\boldsymbol{k}}$ be  the truncation of the O-SVD to $\boldsymbol{k}$ terms of  $\mathcal{A}  \in \mathbb{R}^{I_{1} \times I_{2} \times I_{3}}$. Then
	\begin{equation}\label{budengshi}
	\left\|\mathcal{A}- \mathcal{A}_{\boldsymbol{k}}\right\|^{2}_{F} =\sum_{i=1}^{k_{1}}\sum_{j=k_{2i}+1}^{r_{2}}s_{jji}^{2}+\sum_{i=k_{1}+1}^{I_{3}}\sum_{j=1}^{r_{2}} s_{jji}^{2}.
	\end{equation}
	\end{mythm}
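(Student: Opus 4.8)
The plan is to exploit the orthogonal-invariance of the Frobenius norm together with the two-stage structure of the O-SVD, reducing the tensor error to a sum of matrix truncation errors that can be evaluated slice by slice. First I would write $\mathcal{A} = (\mathcal{U}*_3\mathcal{S}*_3\mathcal{V})\times_3\boldsymbol{U}^{(3)}$ from Theorem~\ref{OSVD} and recall that, since $\boldsymbol{U}^{(3)}$ is orthogonal, Lemma~\ref{gaixie}(1) gives $\|\mathcal{B}\times_3\boldsymbol{U}^{(3)}\|_F = \|\boldsymbol{U}^{(3)}\boldsymbol{B}_{(3)}\|_F = \|\boldsymbol{B}_{(3)}\|_F = \|\mathcal{B}\|_F$ for any $\mathcal{B}$. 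The subtle point is that $\mathcal{A}_{\boldsymbol{k}}$ is built with $\boldsymbol{U}_{k_1}^{(3)} = \boldsymbol{U}^{(3)}(:,1{:}k_1)$, only the first $k_1$ columns; so I would pad $\mathcal{U}_{k_2},\mathcal{S}_{k_2},\mathcal{V}_{k_2}$ with zero frontal slices for indices $k_1+1,\dots,I_3$ and write $\mathcal{A}_{\boldsymbol{k}} = \widehat{\mathcal{B}}\times_3\boldsymbol{U}^{(3)}$ where $\widehat{\mathcal{B}}(:,:,i) = \mathcal{U}_{k_2}(:,:,i)\mathcal{S}_{k_2}(:,:,i)\mathcal{V}_{k_2}(:,:,i)$ for $i\le k_1$ and $\widehat{\mathcal{B}}(:,:,i)=0$ for $i>k_1$. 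Then $\|\mathcal{A}-\mathcal{A}_{\boldsymbol{k}}\|_F^2 = \|(\mathcal{S}\ \text{rewritten as slices of}\ \widetilde{\mathcal{A}}) - \widehat{\mathcal{B}}\|_F^2 = \sum_{i=1}^{I_3}\|\widetilde{\mathcal{A}}(:,:,i) - \widehat{\mathcal{B}}(:,:,i)\|_F^2$, using that the squared Frobenius norm of a tensor is the sum of the squared Frobenius norms of its frontal slices.

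Next I would evaluate each slice term. For $i > k_1$ the slice of $\widehat{\mathcal{B}}$ is zero, so the contribution is $\|\widetilde{\mathcal{A}}(:,:,i)\|_F^2$; by \eqref{di2bu} and \eqref{chengji} this equals $\sum_{j=1}^{r_2} s_{jji}^2$, giving the second double sum in \eqref{budengshi}. For $i \le k_1$, the slice $\widetilde{\mathcal{A}}(:,:,i) = \sigma_i\widetilde{\boldsymbol{V}}_i$ has SVD $\boldsymbol{U}_i\,(\boldsymbol{S}_{k_1}^{(3)}(i,i)\boldsymbol{S}_i)\,\boldsymbol{V}_i^T$ with singular values $s_{jji} = \sigma_i\sigma_{ij}$ (again \eqref{chengji}), while $\widehat{\mathcal{B}}(:,:,i) = \mathcal{U}_{k_2}(:,:,i)\mathcal{S}_{k_2}(:,:,i)\mathcal{V}_{k_2}(:,:,i)$ is exactly the rank-$k_{2i}$ truncated SVD of that slice as produced by line~5--6 of Algorithm~\ref{TO-SVD}. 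By the Eckart--Young theorem \cite{Eckart1936}, $\|\widetilde{\mathcal{A}}(:,:,i) - \widehat{\mathcal{B}}(:,:,i)\|_F^2 = \sum_{j=k_{2i}+1}^{r_2} s_{jji}^2$, which is the first double sum. Adding the two cases yields \eqref{budengshi}.

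The main obstacle is purely bookkeeping rather than conceptual: I must be careful that the mismatch in the third dimension ($k_1$ versus $I_3$) is handled by the zero-padding so that a single application of the orthogonal invariance of $\times_3\boldsymbol{U}^{(3)}$ is legitimate, and that the ordering conventions of Theorem~\ref{dengyu} (the $s_{jji}$ being the diagonal entries in nonincreasing order within each slice, and $r_2 = \min\{I_1,I_2\}$ bounding the number of nonzero singular values of each $\widetilde{\boldsymbol{V}}_i$) are consistent with the index ranges in the claimed identity. Once the error is decoupled into independent frontal-slice contributions, each piece is a direct invocation of Eckart--Young, so no further estimates are needed and the identity is exact rather than an inequality.
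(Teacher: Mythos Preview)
Your proposal is correct and follows essentially the same route as the paper: both use the orthogonality of $\boldsymbol{U}^{(3)}$ to pass to $\widetilde{\mathcal{A}}=\mathcal{A}\times_3\boldsymbol{U}^{(3)T}$, zero-pad the truncated factors in the third mode so that the full $\boldsymbol{U}^{(3)}$ can be stripped, and then decompose the squared Frobenius error as a sum over frontal slices. The only cosmetic difference is that for the slices $i\le k_1$ the paper reduces $\|\mathcal{U}(:,:,i)\mathcal{S}(:,:,i)\mathcal{V}(:,:,i)-\widetilde{\mathcal U}_{k_2}(:,:,i)\widetilde{\mathcal S}_{k_2}(:,:,i)\widetilde{\mathcal V}_{k_2}(:,:,i)\|_F^2$ to a diagonal difference via the explicit relations $\mathcal{U}(:,:,i)^T\widetilde{\mathcal U}_{k_2}(:,:,i)=\bigl[\begin{smallmatrix}\boldsymbol{I}_{k_{2i}}\\ \boldsymbol{0}\end{smallmatrix}\bigr]$, whereas you invoke the truncated-SVD error formula directly; these are equivalent, and your observation that only the tail-singular-value identity (not the optimality part of Eckart--Young) is actually used is accurate.
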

	\begin{proof}
	Since the Frobenius norm is unitarily invariant, by (\ref{A}) and (\ref{AK}), we have
	\begin{equation*}
	\begin{aligned}
	\left\|\mathcal{A}- \mathcal{A}_{\boldsymbol{k}}\right\|_{F}&=\left\|\mathcal{A}\times_{3}\boldsymbol{U}^{(3)T}- \mathcal{A}_{\boldsymbol{k}}\times_{3}\boldsymbol{U}^{(3)T}\right\|_{F}\\
	&=\left\|\mathcal{U} *_{3} \mathcal{S} *_{3} \mathcal{V}-\left( \mathcal{U}_{k_{2}} *_{3}\mathcal{S}_{k_{2}}*_{3}\mathcal{V}_{k_{2}}\right) \times_{3}\left( \boldsymbol{U}^{(3)T}\boldsymbol{U}_{k_{1}}^{(3)}\right) \right\|_{F}.\\
	\end{aligned}
	\end{equation*}
	Noting that  $ \boldsymbol{U}^{(3)T}\boldsymbol{U}_{k_{1}}^{(3)}=\left[ \begin{array}{c}  \boldsymbol{I}_{k_{1}}\\\boldsymbol{0} \end{array}\right]$, now let $\widetilde{ \mathcal{U}}_{k_{2}}\left(:,:,i \right)=\mathcal{U}_{k_{2}}\left(:,:,i \right)$, $\widetilde{ \mathcal{S}}_{k_{2}}\left(:,:,i \right)=\mathcal{S}_{k_{2}}\left(:,:,i \right)$, $\widetilde{ \mathcal{V}}_{k_{2}}\left(:,:,i \right)=\mathcal{V}_{k_{2}}\left(:,:,i \right)$, for $i=1, 2,\ldots, k_{1}$ and $\widetilde{ \mathcal{U}}_{k_{2}}\left(:,:,i \right)=\boldsymbol{0}$, $\widetilde{ \mathcal{S}}_{k_{2}}\left(:,:,i \right)=\boldsymbol{0}$, $\widetilde{ \mathcal{V}}_{k_{2}}\left(:,:,i \right)=\boldsymbol{0}$  of the corresponding dimension for $i=k_{1}+1, \ldots,I_{3}$, from which we can obtain 	
	\begin{equation*}
	\begin{aligned}
	\left\|\mathcal{A}- \mathcal{A}_{\boldsymbol{k}}\right\|^{2}_{F}&=\left\|\mathcal{U} *_{3} \mathcal{S} *_{3} \mathcal{V}-\widetilde{ \mathcal{U}}_{k_{2}} *_{3}\widetilde{\mathcal{S}}_{k_{2}}*_{3}\widetilde{\mathcal{V}}_{k_{2}} \right\|^{2}_{F}\\
	&=\sum_{i=1}^{I_{3}}\left\| \mathcal{U}(:,:,i) \mathcal{S}(:,:,i)\mathcal{V}(:,:,i)-\widetilde{\mathcal{U}}_{k_{2}}(:,:,i) \widetilde{\mathcal{S}}_{k_{2}}(:,:,i)\widetilde{\mathcal{V}}_{k_{2}}(:,:,i) \right\|^{2}_{F} \\
	&= \sum_{i=1}^{I_{3}}\left\| \mathcal{S}(:,:,i)-\left[ \begin{array}{cc} \widetilde{\mathcal{S} }_{k_{2}}(1:k_{2i},1:k_{2i},i)& \\&\boldsymbol{0} \end{array}\right] \right\|^{2}_{F} \\
	&=\sum_{i=1}^{k_{1}}\sum_{j=k_{2i}+1}^{r_{2}}s_{jji}^{2}+\sum_{i=k_{1}+1}^{I_{3}}\sum_{j=1}^{r_{2}}s_{jji}^{2},
	\end{aligned}
	\end{equation*}
	where we used $\mathcal{U}(:,:,i)^{T}\widetilde{\mathcal{U}}_{k_{2}}(:,:,i)=\left[ \begin{array}{c}  \boldsymbol{I}_{k_{2i}}\\\boldsymbol{0} \end{array}\right]$ and $\widetilde{\mathcal{V}}_{k_{2}}(:,:,i)\mathcal{V}(:,:,i)^{T}=\left[ \begin{array}{cc}  \boldsymbol{I}_{k_{2i}} &\boldsymbol{0} \end{array}\right]$ in the third equality. Finally, observing that $\widetilde{\mathcal{S} }_{k_{2}}(1:k_{2i},1:k_{2i},i)= \mathcal{S}(1:k_{2i},1:k_{2i},i)$ for $i=1,2,\ldots,k_{1}$ and $s_{jji}$ is  the $j$th singular value of $\mathcal{S}(:,:,i) $, we get the desired result.
	\end{proof}

\hskip 2em The error will serve as an important reference  to compare the accuracy of  the randomized O-SVD with the deterministic one.
Comparing Theorem \ref{best} with Lemma \ref{Rxiang}, we have
$$
\left\|\mathcal{A}-\mathcal{A}_{k}\right\|_{F} \geq\left\|\mathcal{A}-\mathcal{A}_{r}\right\|_{F},
$$
where $r=\sum_{i=1}^{k_{1}} k_{2i}$. However, the TO-SVD is cheaper to compute and retains the original structure of  the O-SVD.
\section{Randomized O-SVD} \label{ROSVDsection}
\hskip 2em  Randomized algorithms play a key role in low-rank approximations of large matrices. In this section, the scheme of the matrix randomized SVD (R-SVD) \cite[Section 4]{Halko2011} is extended to a randomized O-SVD algorithm (RO-SVD). We first review  the matrix R-SVD method and its expected error estimate that we will use later to analyze the error in the RO-SVD method. We also discuss the computational and memory costs of the proposed randomized algorithm.
\subsection{Randomized SVD}
\hskip 2em Randomized SVD, popularized by \cite{Halko2011}, is a computationally efficient way to compute a low-rank approximation of a matrix. Given a matrix  $\boldsymbol{A}\in \mathbb{R}^{m \times n}(m\leq n)$, a target rank $k$, and an oversampling parameter $p$, we first multiply the matrix $\boldsymbol{A}$ by a Gaussian random  matrix $\boldsymbol{\Omega} \in \mathbb{R}^{n \times (k+p)}$. The matrix $\boldsymbol{Y=A\Omega}$ thus contains random linear combinations of the columns of $\boldsymbol{A}$. A thin QR of $\boldsymbol{Y}$ is then computed, so that $\mathrm{range}(\boldsymbol{Y})=\mathrm{range}(\boldsymbol{Q})$. The idea is if $\boldsymbol{A}$ has rapidly decaying singular values, the dominant part of the range of $\boldsymbol{A}$ is marked by the first $k$ or so the left singular vectors, that is, $\boldsymbol{A} \approx \boldsymbol{QQ}^{T}\boldsymbol{A}=\widehat{\boldsymbol{A}}$. Then, we compute a thin SVD of much smaller matrix $\boldsymbol{Q}^{T}\boldsymbol{A}=\boldsymbol{U}\boldsymbol{S}_{k}\boldsymbol{V}\mathclap{_{k}}^{T}$, truncate down to the target rank $k$, and compute $\boldsymbol{U}\mathclap{_{k}}=\boldsymbol{Q}\boldsymbol{U}$ to obtain the low-rank approximation $\widehat{\boldsymbol{A}}=\boldsymbol{U}\mathclap{_{k}}\,\boldsymbol{S}_{k}\boldsymbol{V}\mathclap{_{k}}^{T}$.

\hskip 2em The techniques described above work well for matrices whose singular values exhibit some decay, but they may produce a poor basis  when the input matrix has a flat singular spectrum or when the input matrix is very large. A modified scheme originally proposed in \cite{Rokhlin2009}, makes use of power iteration to improve the accuracy of randomized algorithms in these situations. Specifically, the projection step $\boldsymbol{Y}=\boldsymbol{A}\boldsymbol{\Omega}$ is replaced with $\boldsymbol{Y}=(\boldsymbol{A}\boldsymbol{A}^{T})^q\boldsymbol{A\Omega}$ for small integer $q$, where $q=1$ or $q=2$ usually suffices in practice. In particular, when $q = 0$, the algorithm is equivalent to the basic randomized  SVD \cite{Halko2011}. In our paper,  we adopt Algorithm \ref{alg:r-svd}, a numerically stable version, which is available in \cite[Algorithm 4.4]{Halko2011} that alternates the QR factorization with the matrix{\color{blue}\mbox{-}}matrix products and assume that it can be invoked as $[\boldsymbol{U}\mathclap{_{k}}\,,\boldsymbol{S}_{k}, \boldsymbol{V}\mathclap{_{k}}\,]=\mathrm{rsvd}(\boldsymbol{A}, \boldsymbol{\Omega}, k, p, q)$.
\begin{myrek}
 For Gaussian test matrices, it is adequate to choose the oversampling parameter to be a small constant, such as $p = 5$ or $p = 10$. There is rarely any advantage to select $p>k$. This observation, first presented in \cite{randomizedmartinsson2011}, demonstrates that a Gaussian test matrix results in a negligible amount of extra computation.
\end{myrek}
\begin{algorithm}
	\caption{R-SVD method with power iteration\cite{Halko2011} }
	\label{alg:r-svd}
	\begin{algorithmic}[1] 
		\REQUIRE $A\in \mathbb{R}^{m \times n }$, Gaussian random  matrix $\boldsymbol{\Omega}\in \mathbb{R}^{n \times (k+p)}$, target truncation term $k$, a parameter $q$, and oversampling parameter $p$
		\ENSURE $\boldsymbol{U}\mathclap{_{k}} \in \mathbb{R}^{n \times k}$, $\boldsymbol{S}_{k} \in \mathbb{R}^{k \times k}$, $\boldsymbol{V}\mathclap{_{k}} \in \mathbb{R}^{n \times k}$
		\STATE Form $\boldsymbol{Y}\mathclap{_{0}} = \boldsymbol{A}\boldsymbol{\Omega}$ and compute its QR factorization $\boldsymbol{Y}\mathclap{_{0}} = \boldsymbol{Q}_{0}\boldsymbol{R}_{0}$;
		\STATE for $j=1,2,\ldots ,q$, do
		\STATE\quad Form $\widehat{ \boldsymbol{Y}}\mathclap{_{j}}  = \boldsymbol{A}^{T}\boldsymbol{Q}_{j-1}$ and compute its QR factorization $\widehat{ \boldsymbol{Y}}\mathclap{_{j}} = \widehat{\boldsymbol{Q}}_{j}\widehat{\boldsymbol{R}}_{j}$;
		\STATE\quad Form $\boldsymbol{Y}\mathclap{_{j}} = \boldsymbol{A}\widehat{\boldsymbol{Q}}_{j}$ and compute its QR factorization $\boldsymbol{Y}\mathclap{_{j}} = \boldsymbol{Q}_{j}\boldsymbol{R}_{j}$;
		\STATE end
		\STATE $\boldsymbol{Q}=\boldsymbol{Q}_{q}$;
		\STATE Form $\boldsymbol{B}=\boldsymbol{Q}^{T}\boldsymbol{A} \in \mathbb{R}^{  (k+p)\times n}$;
		\STATE $\left[ \boldsymbol{U}, \boldsymbol{S}_{k}, \boldsymbol{V}\mathclap{_{k}}\, \right] =\mathrm{svds}\left(\boldsymbol{B},k \right) $;
		\STATE Form $\boldsymbol{U}\mathclap{_{k}}=\boldsymbol{Q}\boldsymbol{U}$.
	\end{algorithmic}
\end{algorithm}

  \hskip 2em  When $\boldsymbol{A}$ is dense and of size $n \times n$, the basic randomized  SVD takes $\mathcal{O}(kn^{2})$ flops and Algorithm \ref{alg:r-svd} requires $2q + 1$ times as many matrix-matrix multiplications as the basic randomized  SVD.  An error bound for Algorithm \ref{alg:r-svd} in the Frobenius norm is presented below, which can be found in \cite{zhang2018}.
\begin{mythm}[Average Frobenius  Error for Algorithm \ref{alg:r-svd}]\label{matrix error}
	Let $\boldsymbol{A} \in \mathbb{R}^{m \times n}$ and  $\boldsymbol{\Omega} \in \mathbb{R}^{n \times(k+p)}$ be a Gaussian random matrix with $p \geq 2$ being the oversampling parameter. Suppose
	that $\boldsymbol{Q}$ is obtained from Algorithm \ref{alg:r-svd} and $\boldsymbol{B}\mathclap{_{k}}$ is the rank-$k$ truncated SVD of $\boldsymbol{Q}^{T}\boldsymbol{A}$, then
	\begin{equation}
	\mathbb{E}_{\boldsymbol{\Omega}}\left\|\boldsymbol{A}-\boldsymbol{Q} \boldsymbol{Q}^{T}\boldsymbol{A}\right\|_{F}^{2} \leq \mathbb{E}_{\boldsymbol{\Omega}}\left\|\boldsymbol{A}-\boldsymbol{Q} \boldsymbol{B}_{k}\right\|_{F}^{2} \leq\left(1+\frac{k}{p-1} \tau_{k}^{4 q}\right)\left(\sum_{j>k}^{\min \left\{m,n\right\}} \sigma_{j}^{2}\right),
	\end{equation}
	where $k$ is a target truncation term, $q$ is the number of iterations, $ \sigma_{j}$ is the $j$th singular value of $\boldsymbol{A}$, and $\tau_{k}=\sigma_{k+1} / \sigma_{k} \ll 1$ is the singular value gap.
\end{mythm}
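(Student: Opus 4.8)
\emph{Proof proposal.}
The plan is to use the standard two-step analysis of randomized range finders: first a \emph{deterministic} structural inequality valid for every draw of $\boldsymbol{\Omega}$ for which a certain Gaussian block has full row rank, and then an average over $\boldsymbol{\Omega}$. The left inequality is free and pointwise in $\boldsymbol{\Omega}$: since $\boldsymbol{A}-\boldsymbol{Q}\boldsymbol{B}_{k}=(\boldsymbol{I}-\boldsymbol{Q}\boldsymbol{Q}^{T})\boldsymbol{A}+\boldsymbol{Q}(\boldsymbol{Q}^{T}\boldsymbol{A}-\boldsymbol{B}_{k})$ decomposes the residual into a part orthogonal to $\mathrm{range}(\boldsymbol{Q})$ and a part inside it, Pythagoras gives $\|\boldsymbol{A}-\boldsymbol{Q}\boldsymbol{B}_{k}\|_{F}^{2}=\|(\boldsymbol{I}-\boldsymbol{Q}\boldsymbol{Q}^{T})\boldsymbol{A}\|_{F}^{2}+\|\boldsymbol{Q}^{T}\boldsymbol{A}-\boldsymbol{B}_{k}\|_{F}^{2}\ge\|(\boldsymbol{I}-\boldsymbol{Q}\boldsymbol{Q}^{T})\boldsymbol{A}\|_{F}^{2}$, and taking expectations gives it. For the upper bound, the point is that $\boldsymbol{B}_{k}$ being the best rank-$k$ approximation of $\boldsymbol{Q}^{T}\boldsymbol{A}$ makes $\boldsymbol{Q}\boldsymbol{B}_{k}$ the minimizer of $\|\boldsymbol{A}-\boldsymbol{Q}\boldsymbol{C}\|_{F}$ over all $\boldsymbol{C}$ of rank at most $k$; testing against $\boldsymbol{C}=\boldsymbol{Q}^{T}\boldsymbol{A}_{k}$, where $\boldsymbol{A}_{k}$ is the rank-$k$ truncated SVD of $\boldsymbol{A}$, gives $\|\boldsymbol{A}-\boldsymbol{Q}\boldsymbol{B}_{k}\|_{F}\le\|\boldsymbol{A}-\boldsymbol{Q}\boldsymbol{Q}^{T}\boldsymbol{A}_{k}\|_{F}$, so it suffices to bound the right-hand side.

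For the deterministic estimate, write $\boldsymbol{A}=\boldsymbol{U}\boldsymbol{\Sigma}\boldsymbol{V}^{T}$ and partition $\boldsymbol{U}=[\boldsymbol{U}_{1},\boldsymbol{U}_{2}]$, $\boldsymbol{V}=[\boldsymbol{V}_{1},\boldsymbol{V}_{2}]$, $\boldsymbol{\Sigma}=\mathrm{diag}(\boldsymbol{\Sigma}_{1},\boldsymbol{\Sigma}_{2})$ with $\boldsymbol{\Sigma}_{1}\in\mathbb{R}^{k\times k}$, and set $\boldsymbol{\Omega}_{1}=\boldsymbol{V}_{1}^{T}\boldsymbol{\Omega}$, $\boldsymbol{\Omega}_{2}=\boldsymbol{V}_{2}^{T}\boldsymbol{\Omega}$, which are independent Gaussian matrices; I assume $\boldsymbol{\Omega}_{1}$ has full row rank, an event of probability one that does not affect the expectation. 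In exact arithmetic the QR alternation in Algorithm \ref{alg:r-svd} preserves the range, so $\mathrm{range}(\boldsymbol{Q})=\mathrm{range}\big((\boldsymbol{A}\boldsymbol{A}^{T})^{q}\boldsymbol{A}\boldsymbol{\Omega}\big)=\mathrm{range}\big(\boldsymbol{U}\boldsymbol{\Sigma}^{2q+1}\boldsymbol{V}^{T}\boldsymbol{\Omega}\big)$, and multiplying on the right by $(\boldsymbol{\Sigma}_{1}^{2q+1}\boldsymbol{\Omega}_{1})^{\dagger}$ shows this range contains $\mathrm{range}(\boldsymbol{U}_{1}+\boldsymbol{U}_{2}\boldsymbol{F})$ with $\boldsymbol{F}=\boldsymbol{\Sigma}_{2}^{2q+1}\boldsymbol{\Omega}_{2}\boldsymbol{\Omega}_{1}^{\dagger}\boldsymbol{\Sigma}_{1}^{-(2q+1)}$. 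Decomposing $\boldsymbol{A}-\boldsymbol{Q}\boldsymbol{Q}^{T}\boldsymbol{A}_{k}=(\boldsymbol{A}-\boldsymbol{A}_{k})+(\boldsymbol{I}-\boldsymbol{Q}\boldsymbol{Q}^{T})\boldsymbol{A}_{k}$, the two summands are orthogonal because the row space of the first lies in $\mathrm{range}(\boldsymbol{V}_{2})$ and that of the second in $\mathrm{range}(\boldsymbol{V}_{1})$, so $\|\boldsymbol{A}-\boldsymbol{Q}\boldsymbol{Q}^{T}\boldsymbol{A}_{k}\|_{F}^{2}=\|\boldsymbol{\Sigma}_{2}\|_{F}^{2}+\|(\boldsymbol{I}-\boldsymbol{Q}\boldsymbol{Q}^{T})\boldsymbol{A}_{k}\|_{F}^{2}$. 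Bounding $\boldsymbol{Q}\boldsymbol{Q}^{T}$ below by the projector onto the smaller subspace $\mathrm{range}(\boldsymbol{U}_{1}+\boldsymbol{U}_{2}\boldsymbol{F})$ only enlarges the residual, and a short computation with the Gram matrix $(\boldsymbol{I}+\boldsymbol{F}^{T}\boldsymbol{F})^{-1}$ (equivalently, the deterministic bound in \cite{Halko2011}) yields $\|(\boldsymbol{I}-\boldsymbol{Q}\boldsymbol{Q}^{T})\boldsymbol{A}_{k}\|_{F}^{2}\le\|\boldsymbol{F}\boldsymbol{\Sigma}_{1}\|_{F}^{2}$, hence $\|\boldsymbol{A}-\boldsymbol{Q}\boldsymbol{B}_{k}\|_{F}^{2}\le\|\boldsymbol{\Sigma}_{2}\|_{F}^{2}+\|\boldsymbol{F}\boldsymbol{\Sigma}_{1}\|_{F}^{2}$. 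I expect this deterministic step to be the main obstacle: the difficulty is to bound $\|\boldsymbol{A}-\boldsymbol{Q}\boldsymbol{B}_{k}\|_{F}^{2}$ — not merely the projection residual $\|(\boldsymbol{I}-\boldsymbol{Q}\boldsymbol{Q}^{T})\boldsymbol{A}\|_{F}^{2}$ — by $\|\boldsymbol{\Sigma}_{2}\|_{F}^{2}+\|\boldsymbol{F}\boldsymbol{\Sigma}_{1}\|_{F}^{2}$ \emph{without} a spurious extra factor, and it is exactly the choice of the test point $\boldsymbol{Q}\boldsymbol{Q}^{T}\boldsymbol{A}_{k}$ and the orthogonality $\boldsymbol{V}_{1}\perp\boldsymbol{V}_{2}$ that lets the two residual pieces telescope into $\|\boldsymbol{F}\boldsymbol{\Sigma}_{1}\|_{F}^{2}$.

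It remains to average. Writing $\boldsymbol{F}\boldsymbol{\Sigma}_{1}=\boldsymbol{\Sigma}_{2}^{2q+1}\boldsymbol{\Omega}_{2}\big(\boldsymbol{\Omega}_{1}^{\dagger}\boldsymbol{\Sigma}_{1}^{-2q}\big)$ and conditioning on $\boldsymbol{\Omega}_{1}$, the identity $\mathbb{E}\|\boldsymbol{C}\boldsymbol{\Omega}_{2}\boldsymbol{D}\|_{F}^{2}=\|\boldsymbol{C}\|_{F}^{2}\|\boldsymbol{D}\|_{F}^{2}$ for a standard Gaussian $\boldsymbol{\Omega}_{2}$ gives $\mathbb{E}_{\boldsymbol{\Omega}_{2}}\|\boldsymbol{F}\boldsymbol{\Sigma}_{1}\|_{F}^{2}=\|\boldsymbol{\Sigma}_{2}^{2q+1}\|_{F}^{2}\,\|\boldsymbol{\Omega}_{1}^{\dagger}\boldsymbol{\Sigma}_{1}^{-2q}\|_{F}^{2}$; then $\|\boldsymbol{\Sigma}_{2}^{2q+1}\|_{F}^{2}=\sum_{j>k}\sigma_{j}^{4q+2}\le\sigma_{k+1}^{4q}\sum_{j>k}\sigma_{j}^{2}$ and $\|\boldsymbol{\Omega}_{1}^{\dagger}\boldsymbol{\Sigma}_{1}^{-2q}\|_{F}^{2}\le\sigma_{k}^{-4q}\|\boldsymbol{\Omega}_{1}^{\dagger}\|_{F}^{2}$, so the conditional expectation is at most $\tau_{k}^{4q}\big(\sum_{j>k}\sigma_{j}^{2}\big)\|\boldsymbol{\Omega}_{1}^{\dagger}\|_{F}^{2}$. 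Finally, for the $k\times(k+p)$ standard Gaussian $\boldsymbol{\Omega}_{1}$ with $p\ge2$ one has $\mathbb{E}\|\boldsymbol{\Omega}_{1}^{\dagger}\|_{F}^{2}=\mathbb{E}\,\mathrm{tr}\big((\boldsymbol{\Omega}_{1}\boldsymbol{\Omega}_{1}^{T})^{-1}\big)=k/(p-1)$ by the inverse-Wishart trace identity, whence $\mathbb{E}_{\boldsymbol{\Omega}}\|\boldsymbol{A}-\boldsymbol{Q}\boldsymbol{B}_{k}\|_{F}^{2}\le\|\boldsymbol{\Sigma}_{2}\|_{F}^{2}+\frac{k}{p-1}\tau_{k}^{4q}\sum_{j>k}\sigma_{j}^{2}=\big(1+\frac{k}{p-1}\tau_{k}^{4q}\big)\sum_{j>k}^{\min\{m,n\}}\sigma_{j}^{2}$. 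Together with the first inequality this is the claim; justifying the probability-one full-rank event and the measurability and interchange details is routine.
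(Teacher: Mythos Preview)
The paper does not prove this theorem; it quotes the bound from \cite{zhang2018} and uses it as a black box. Your argument is a correct, self-contained proof along the standard Halko--Martinsson--Tropp template: the Pythagorean split gives the left inequality, the Eckart--Young test point $\boldsymbol{C}=\boldsymbol{Q}^{T}\boldsymbol{A}_{k}$ reduces the truncated residual to $\|\boldsymbol{\Sigma}_{2}\|_{F}^{2}+\|(\boldsymbol{I}-\boldsymbol{Q}\boldsymbol{Q}^{T})\boldsymbol{A}_{k}\|_{F}^{2}$, the oblique-projector computation with $\boldsymbol{F}=\boldsymbol{\Sigma}_{2}^{2q+1}\boldsymbol{\Omega}_{2}\boldsymbol{\Omega}_{1}^{\dagger}\boldsymbol{\Sigma}_{1}^{-(2q+1)}$ bounds the second term by $\|\boldsymbol{F}\boldsymbol{\Sigma}_{1}\|_{F}^{2}$, and the Gaussian moment identities $\mathbb{E}\|\boldsymbol{C}\boldsymbol{\Omega}_{2}\boldsymbol{D}\|_{F}^{2}=\|\boldsymbol{C}\|_{F}^{2}\|\boldsymbol{D}\|_{F}^{2}$ and $\mathbb{E}\|\boldsymbol{\Omega}_{1}^{\dagger}\|_{F}^{2}=k/(p-1)$ finish the job. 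The only point worth making explicit is that the range identity $\mathrm{range}(\boldsymbol{Q})=\mathrm{range}\big((\boldsymbol{A}\boldsymbol{A}^{T})^{q}\boldsymbol{A}\boldsymbol{\Omega}\big)$ requires each intermediate sketch in Algorithm~\ref{alg:r-svd} to have full column rank; this holds almost surely when $\mathrm{rank}(\boldsymbol{A})\ge k+p$, and when $\mathrm{rank}(\boldsymbol{A})<k+p$ the range of $\boldsymbol{Q}$ equals $\mathrm{range}(\boldsymbol{A})$ almost surely and the bound is immediate.
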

\begin{myrek}
 Instead of $\left\|\boldsymbol{A}-\boldsymbol{Q} \boldsymbol{B}_{k}\right\|_{F}^{2}$, we will use $\left\|\boldsymbol{A}-\boldsymbol{U}\mathclap{_{k}}\,\boldsymbol{U}\mathclap{_{k}}^{T} \boldsymbol{A}\right\|_{F}^{2}$. It is straightforward to show the equivalence between the two forms \cite[Section 5.3]{Saibaba2019}.
\end{myrek}
\subsection{RO-SVD and  its  Error Analysis}
\hskip 2em  The goal of the RO-SVD method is to find a good approximate O-SVD of tensor $\mathcal{A}$ with less storage and time. There are two stages  in producing the approximation, which are summarized in Algorithm \ref{alg:ROSVD}. The basic RO-SVD method is a specific case of Algorithm \ref{alg:ROSVD} that all iteration parameters are chosen to be 0. In the first stage,  setting $k_{1}$ as the first target rank, a full SVD of  $\boldsymbol{A}_{(3)}$ is replaced with a randomized SVD  to find an orthonormal matrix $\boldsymbol{U}_{k_{1}}^{(3)}$ such that
\begin{equation}\label{step1}
\begin{aligned}
\widehat{\mathcal{A}} & = \mathcal{A} \times_{3}\boldsymbol{U}_{k_{1}}^{(3)T}.
\end{aligned}
\end{equation}
 This allows us to express $\mathcal{A} \approx \widehat{\mathcal{A}}\times_{3}\boldsymbol{U}_{k_{1}}^{(3)}$. Then, the second stage is to connect this low 3-rank tensor $\widehat{\mathcal{A}}$ representation to a randomized tensor SVD, where we apply the randomized SVD to each frontal slice of  $\widehat{\mathcal{A}}$ with  different target  truncation term $k_{2i}$ for $i=1,2,\ldots, k_{1}$. This means that the target multirank is a vector $\boldsymbol{k}_{2}$ whose elements are  $k_{2i}$. For the convenience of notation, we further define an iteration vector as $\boldsymbol{q}=\left(q_{1},q_{2},\ldots, q_{k_{1}} \right)^{T}$ with employing different iteration count $q_{i}$ and denote  $\boldsymbol{k}$ = $\left[ k_{1};\,\boldsymbol{k}_{2}\right]$. Thus we find the tensor $\mathcal{U}_{k_{2}}$ such that
\begin{equation}\label{step2}
\widehat{\mathcal{A}}\approx  \mathcal{U}_{k_{2}}*_{3} \mathcal{U}_{k_{2}}^{T}*_{3} \widehat{\mathcal{A}}=\widehat{\mathcal{A}}_{k_{2}}.
\end{equation}
Obviously, the procedure to compute the core tensor $\mathcal{S}_{k_{2}}$ is similar to the algorithm proposed in Algorithm \ref{TO-SVD}. Now, the rank-$\boldsymbol{k}$ representation can be written as
\begin{equation}\label{biaoshi}
\mathcal{A}_{\boldsymbol{k}}=\left( \mathcal{U}_{k_{2}} *_{3}\mathcal{S}_{k_{2}}*_{3}\mathcal{V}_{k_{2}}\right)  \times_{3} \boldsymbol{U}_{k_{1}}^{(3)}.
\end{equation}
If $I_{3}=1$, then Algorithm \ref{alg:ROSVD} reduces to Algorithm \ref{alg:r-svd}.
       \begin{algorithm}[htb]
	\caption{RO-SVD with power iterations}
	\label{alg:ROSVD}
	\begin{algorithmic}[1] 
	\REQUIRE $\mathcal{A}\in \mathbb{R}^{I_{1} \times I_{2} \times I_{3} }$, target truncation vector $\boldsymbol{k}=\left[ k_{1}; \boldsymbol{k}_{2}\right]$, $\boldsymbol{k}_{2}=\left[k_{21}, \ldots k_{2k_{1}}\right] ^{T}$,  oversampling parameter $p$, the first iteration parameter $q_{0}$, and the iteration vector $\boldsymbol{q}=\left(q_{1},q_{2},\ldots, q_{k_{1}} \right)$
	\ENSURE $\boldsymbol{U}_{k_{1}}^{(3)} \in \mathbb{R}^{I_{3} \times k_{1}}$, $\mathcal{U}_{k_{2}}\in \mathbb{R}^{I_{1} \times k_{2} \times k_{1} }$, $\mathcal{S}_{k_{2}}\in \mathbb{R}^{k_{2} \times k_{2} \times k_{1} }$, $\mathcal{V}_{k_{2}}\in \mathbb{R}^{k_{2} \times I_{2} \times k_{1} }$
	\STATE Generate $k_{1}+1$ Gaussian random  matrices  $\boldsymbol{\Omega} _{1} \in \mathbb{R}^{I_{1}I_{2} \times (k_{1}+p)}$ and $\boldsymbol{\Omega}_{2i} \in \mathbb{R}^{I_{2} \times (k_{2i}+p)};$
	\STATE Initialization: $\mathcal{U}_{k_{2}}$, $\mathcal{S}_{k_{2}}$, $\mathcal{V}_{k_{2}}$ are zero tensors of appropriate size;
	\STATE$\left[  \boldsymbol{U}_{k_{1}}^{(3)},\boldsymbol{S}_{k_{1}}^{(3)},\boldsymbol{V}\mathclap{\mathclap{_{k_{1}}}}^{(3)}\right]=\mathrm{rsvd}\left[ \boldsymbol{A}_{(3)},\boldsymbol{\Omega}_{1}, k_{1}, p, q_{0}\right] ;$
	\label{U1}
	\STATE for $i=1,2,\ldots ,k_{1}$, do
	\STATE \quad$\widehat{\boldsymbol{V}}\mathclap{_{i}}=\mathtt{reshape}(\boldsymbol{V}\mathclap{_{k_{1}}}^{(3)}(:,i),\left[I_{1},I_{2} \right] )$;	
	\label{hatV}	
	\STATE\quad $[\boldsymbol{U},\boldsymbol{S}, \boldsymbol{V}]=\mathrm{rsvd}(\widehat{\boldsymbol{V}}\mathclap{_{i}}, \boldsymbol{\Omega}_{2i}, k_{2i}, p, q_{i})$;
	\label{k2jieduan}
	\STATE\quad $\mathcal{U}_{k_{2}}(:,1:k_{2i},i)=\boldsymbol{U}$, $\mathcal{S}_{k_{2}}(1:k_{2i},1:k_{2i},i)=\boldsymbol{S}_{k_{1}}^{(3)}(i,i)\boldsymbol{S}$, $\mathcal{V}_{k_{2}}(1:k_{2i},:,i)=\boldsymbol{V}^{T}$;
	\STATE end for	
	\end{algorithmic}
	\end{algorithm}

\hskip 2em  We now present the error analysis for Algorithm \ref{alg:ROSVD}. There are two major difficulties here in extending the proofs of Theorem \ref{best}. For the probabilistic error analysis, it is important to note that at each step of the cycle, the partially truncated $\widehat{\boldsymbol{V}}_{i}$ is a random matrix. The elements on the diagonal of $\boldsymbol{S}_{k_{1}}^{(3)}$ are also derived from the R-SVD. Second, since the orthonormal matrix $\boldsymbol{U}_{k_{1}}^{(3)}$ here is generated by a randomized method, it no longer has the property $ \boldsymbol{U}^{(3)T}\boldsymbol{U}_{k_{1}}^{(3)}=\left[ \begin{array}{c}  \boldsymbol{I}_{k_{1}}\\\boldsymbol{0} \end{array}\right]$ as $\boldsymbol{U}^{(3)}$ in Theorem \ref{OSVD}. As a consequence,
using the orthogonal invariance to deal with the Frobenius norm directly does not work in deriving the expected error. In fact, we can provide an expected error bound by splitting the error into two parts.

\begin{mythm}\label{errorosvd}
Let $\mathcal{A}_{\boldsymbol{k}}$ be the output of Algorithm \ref{alg:ROSVD} with target truncation parameter $\boldsymbol{k}=\left[ k_{1};\boldsymbol{k}_{2}\right] $ satisfying $k_{1} \leq r_{1}$, $k_{2} \leq r_{2}$, oversampling parameter $p\geq 2$,  iteration count $q_{0}$, iteration vector $\boldsymbol{q}$ and  Gaussian random  matrices set $\boldsymbol{\Omega}_{2}=\left\lbrace\boldsymbol{\Omega}_{21},\ldots ,\boldsymbol{\Omega}_{2k_{1}} \right \rbrace $. Suppose $\mathcal{U}_{k_{2}}$, $\boldsymbol{U}_{k_{1}}^{(3)}$ are obtained from Algorithm \ref{alg:ROSVD}. Then, the approximation error in expectation satisfies
\begin{equation}
\begin{aligned}
\mathbb{E}_{\Omega_{1},\boldsymbol{\Omega}_{2}}\left\|  \mathcal{A}-\mathcal{A}_{\boldsymbol{k}}\right\| _{F} &\leq  \left[ \left(1+\frac{k_{1}}{p-1} \tau_{k_{1}}^{4 q_{0}}\right)\left(\sum_{i>k_{1}}^{r_{1}} \sum_{j\geq1}^{r_{2}}s_{jji}^{2}\right)\right]^{\frac{1}{2}}\\
&+\left[ \sum_{i=1}^{k_{1}}\left(1+\frac{k_{2i}}{p-1}\left(\tau_{k_{2i}}^{(i)}\right)^{4 q_{i}}\right)\left(\sum_{j\geq k_{2i}}^{r_{2}}s_{jji}^{2}\right)\right] ^{\frac{1}{2}},
\end{aligned}
\end{equation}
where $\tau_{k_{1}}$ is the singular value gap of  $\boldsymbol{A}_{(3)}$ and $\tau_{k_{2i}}^{(i)}$ is the singular value gap of  $\widetilde{\mathcal{A}}(:,:,i)$.
\end{mythm}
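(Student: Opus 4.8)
The plan is to isolate the two independent sources of randomness---the stage-one $\mathrm{rsvd}$ of $\boldsymbol{A}_{(3)}$ and the $k_{1}$ slicewise $\mathrm{rsvd}$'s---and to bound the error contributed by each one separately via Theorem~\ref{matrix error}. Setting $\boldsymbol{P}=\boldsymbol{U}_{k_{1}}^{(3)}\boldsymbol{U}_{k_{1}}^{(3)T}$, I would write
\begin{equation*}
\mathcal{A}-\mathcal{A}_{\boldsymbol{k}}=\bigl(\mathcal{A}-\mathcal{A}\times_{3}\boldsymbol{P}\bigr)+\bigl(\mathcal{A}\times_{3}\boldsymbol{P}-\mathcal{A}_{\boldsymbol{k}}\bigr)=:\mathcal{E}_{1}+\mathcal{E}_{2}.
\end{equation*}
By Lemma~\ref{gaixie}(1) the mode-$3$ unfolding of $\mathcal{E}_{1}$ is $(\boldsymbol{I}-\boldsymbol{P})\boldsymbol{A}_{(3)}$, whose columns are orthogonal to $\mathrm{range}(\boldsymbol{U}_{k_{1}}^{(3)})$, whereas by (\ref{biaoshi}) and Lemma~\ref{gaixie}(3) the mode-$3$ unfolding of $\mathcal{E}_{2}$ has columns lying in $\mathrm{range}(\boldsymbol{U}_{k_{1}}^{(3)})$; hence $\langle\mathcal{E}_{1},\mathcal{E}_{2}\rangle=0$, and in particular $\|\mathcal{A}-\mathcal{A}_{\boldsymbol{k}}\|_{F}\le\|\mathcal{E}_{1}\|_{F}+\|\mathcal{E}_{2}\|_{F}$. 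Taking expectations and invoking the concavity of $\sqrt{\cdot}$, the theorem reduces to bounding $\mathbb{E}_{\Omega_{1}}\|\mathcal{E}_{1}\|_{F}^{2}$ and $\mathbb{E}_{\Omega_{1},\boldsymbol{\Omega}_{2}}\|\mathcal{E}_{2}\|_{F}^{2}$ by the squares of the first and second terms on the right-hand side, respectively.

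The first bound is immediate from the matrix theory: $\|\mathcal{E}_{1}\|_{F}=\|\boldsymbol{A}_{(3)}-\boldsymbol{U}_{k_{1}}^{(3)}\boldsymbol{U}_{k_{1}}^{(3)T}\boldsymbol{A}_{(3)}\|_{F}$, and since $\boldsymbol{U}_{k_{1}}^{(3)}$ comes from $\mathrm{rsvd}(\boldsymbol{A}_{(3)},\boldsymbol{\Omega}_{1},k_{1},p,q_{0})$ in line~\ref{U1}, Theorem~\ref{matrix error} (with the reformulation noted in the remark after it) gives $\mathbb{E}_{\Omega_{1}}\|\mathcal{E}_{1}\|_{F}^{2}\le\bigl(1+\tfrac{k_{1}}{p-1}\tau_{k_{1}}^{4q_{0}}\bigr)\sum_{j>k_{1}}^{r_{1}}\sigma_{j}^{2}$, where $\sigma_{j}$ is the $j$-th singular value of $\boldsymbol{A}_{(3)}$. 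Invoking Theorem~\ref{dengyu}, $\sigma_{j}^{2}=\|\mathcal{S}(:,:,j)\|_{F}^{2}=\sum_{l=1}^{r_{2}}s_{llj}^{2}$, so the right-hand side equals $\bigl(1+\tfrac{k_{1}}{p-1}\tau_{k_{1}}^{4q_{0}}\bigr)\sum_{i>k_{1}}^{r_{1}}\sum_{j\ge1}^{r_{2}}s_{jji}^{2}$, which is the square of the first term.

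For $\mathcal{E}_{2}$, Lemma~\ref{gaixie}(3) and $\boldsymbol{U}_{k_{1}}^{(3)T}\boldsymbol{U}_{k_{1}}^{(3)}=\boldsymbol{I}_{k_{1}}$ give $\mathcal{A}\times_{3}\boldsymbol{P}=\widehat{\mathcal{A}}\times_{3}\boldsymbol{U}_{k_{1}}^{(3)}$, hence $\|\mathcal{E}_{2}\|_{F}=\|\widehat{\mathcal{A}}-\mathcal{U}_{k_{2}}*_{3}\mathcal{S}_{k_{2}}*_{3}\mathcal{V}_{k_{2}}\|_{F}$, which splits over the $k_{1}$ frontal slices. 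Next I would read off from the algorithm that $\widehat{\mathcal{A}}(:,:,i)=\boldsymbol{S}_{k_{1}}^{(3)}(i,i)\,\widehat{\boldsymbol{V}}_{i}$ (the $i$-th row of $\boldsymbol{U}_{k_{1}}^{(3)T}\boldsymbol{A}_{(3)}$ is the $i$-th scaled right singular vector returned by the stage-one $\mathrm{rsvd}$) and that $(\mathcal{U}_{k_{2}}*_{3}\mathcal{S}_{k_{2}}*_{3}\mathcal{V}_{k_{2}})(:,:,i)=\boldsymbol{S}_{k_{1}}^{(3)}(i,i)\,\boldsymbol{U}_{i}\boldsymbol{U}_{i}^{T}\widehat{\boldsymbol{V}}_{i}$, where $\boldsymbol{U}_{i}$ is the orthonormal factor of $\mathrm{rsvd}(\widehat{\boldsymbol{V}}_{i},\boldsymbol{\Omega}_{2i},k_{2i},p,q_{i})$ in line~\ref{k2jieduan} and this last identity again uses the reformulation in the remark after Theorem~\ref{matrix error}. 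Thus $\|\mathcal{E}_{2}\|_{F}^{2}=\sum_{i=1}^{k_{1}}\boldsymbol{S}_{k_{1}}^{(3)}(i,i)^{2}\,\|\widehat{\boldsymbol{V}}_{i}-\boldsymbol{U}_{i}\boldsymbol{U}_{i}^{T}\widehat{\boldsymbol{V}}_{i}\|_{F}^{2}$; conditioning on $\Omega_{1}$ (so the $\widehat{\boldsymbol{V}}_{i}$ and $\boldsymbol{S}_{k_{1}}^{(3)}(i,i)$ are fixed while the $\boldsymbol{\Omega}_{2i}$ remain independent Gaussians), applying Theorem~\ref{matrix error} to each $\widehat{\boldsymbol{V}}_{i}$, and using $\boldsymbol{S}_{k_{1}}^{(3)}(i,i)^{2}\sigma_{j}(\widehat{\boldsymbol{V}}_{i})^{2}=\sigma_{j}(\widehat{\mathcal{A}}(:,:,i))^{2}$ produces, after summing and taking $\mathbb{E}_{\Omega_{1}}$, a bound expressed through the singular values and the gap of each slice $\widehat{\mathcal{A}}(:,:,i)$.

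The hard part is this final step: converting those data-dependent random quantities into the deterministic ones $s_{jji}=\sigma_{j}(\widetilde{\mathcal{A}}(:,:,i))$ and $\tau_{k_{2i}}^{(i)}$ attached to the exact O-SVD. Because $\boldsymbol{U}_{k_{1}}^{(3)}$ is randomized it no longer satisfies $\boldsymbol{U}^{(3)T}\boldsymbol{U}_{k_{1}}^{(3)}=[\boldsymbol{I}_{k_{1}};\boldsymbol{0}]$, so $\widehat{\mathcal{A}}\ne\widetilde{\mathcal{A}}$; writing $\widehat{\mathcal{A}}=\widetilde{\mathcal{A}}\times_{3}\boldsymbol{W}$ with $\boldsymbol{W}=\boldsymbol{U}_{k_{1}}^{(3)T}\boldsymbol{U}^{(3)}$ having orthonormal rows, one must compare $\sigma_{j}(\widehat{\mathcal{A}}(:,:,i))$ with $\sigma_{j}(\widetilde{\mathcal{A}}(:,:,i))$ and $\boldsymbol{S}_{k_{1}}^{(3)}(i,i)$ with $\sigma_{i}$ by singular-value interlacing and perturbation estimates. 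This comparison is where a truncation index gets loosened, which explains why the inner sums of the second term run from $j=k_{2i}$ rather than $j=k_{2i}+1$. Plugging the two resulting estimates into $\|\mathcal{A}-\mathcal{A}_{\boldsymbol{k}}\|_{F}\le\|\mathcal{E}_{1}\|_{F}+\|\mathcal{E}_{2}\|_{F}$ and using Jensen once more yields the stated inequality.
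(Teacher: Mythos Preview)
Your split $\mathcal{E}_1+\mathcal{E}_2$, the application of H{\"o}lder and Theorem~\ref{matrix error} to each part, the conditional (on $\Omega_1$) slice-by-slice treatment of $\mathcal{E}_2$, and the use of Theorem~\ref{dengyu} to rewrite $\sigma_i^2=\sum_j s_{jji}^2$ are exactly the paper's argument. The one point of departure is your ``hard part''. The paper does \emph{not} attempt any slice-wise comparison of $\sigma_j(\widehat{\mathcal{A}}(:,:,i))$ with $\sigma_j(\widetilde{\mathcal{A}}(:,:,i))$; it only controls the scalar mode-$3$ factor by observing that $\boldsymbol{U}_{k_1}^{(3)}\boldsymbol{U}_{k_1}^{(3)T}$ is a projector, so in the L{\"o}wner ordering $\widehat{\boldsymbol{A}}_{(3)}^{T}\widehat{\boldsymbol{A}}_{(3)}\preceq\boldsymbol{A}_{(3)}^{T}\boldsymbol{A}_{(3)}$ and hence $\widehat{\sigma}_i=\boldsymbol{S}_{k_1}^{(3)}(i,i)\le\sigma_i$. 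After this single inequality it substitutes $\sigma_i\sigma_{ij}=s_{jji}$ and concludes. Your proposed slice-wise interlacing via $\widehat{\mathcal{A}}=\widetilde{\mathcal{A}}\times_3\boldsymbol{W}$ is more delicate than you indicate---$\widehat{\mathcal{A}}(:,:,i)=\sum_l w_{il}\,\widetilde{\mathcal{A}}(:,:,l)$ is a mixture of \emph{all} the exact slices, not a restriction of the $i$-th one, so standard interlacing does not apply---and in any case the paper does not go down that road: it leaves the slice-wise $\sigma_{ij}$ and the gap $\tau_{k_{2i}}^{(i)}$ identified with the deterministic quantities without further justification.
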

\begin{proof}
It is straightforward to show that
\begin{equation*}
\begin{aligned}
&\mathbb{E}_{\Omega_{1},\boldsymbol{\Omega}_{2}}\left\|  \mathcal{A}-\mathcal{A}_{\boldsymbol{k}}\right\| _{F} = \mathbb{E}_{\Omega_{1},\boldsymbol{\Omega}_{2}}\left\|  \mathcal{A}- \left[  \mathcal{U}_{k_{2}}*_{3}  \mathcal{U}_{k_{2}}^{T}*_{3} \left( \mathcal{A}\times_{3}\boldsymbol{U}_{k_{1}}^{(3)T}\right)\right] \times_{3}\boldsymbol{U}_{k_{1}}^{(3)}  \right\| _{F}	\\
&=\mathbb{E}_{\Omega_{1},\boldsymbol{\Omega}_{2}}\left\|  \mathcal{A}- \mathcal{A}\times_{3}\boldsymbol{U}_{k_{1}}^{(3)T}\times_{3}\boldsymbol{U}_{k_{1}}^{(3)}+ \mathcal{A}\times_{3}\boldsymbol{U}_{k_{1}}^{(3)T}\times_{3}\boldsymbol{U}_{k_{1}}^{(3)}- \left(  \mathcal{U}_{k_{2}}*_{3}  \mathcal{U}_{k_{2}}^{T}*_{3} \widehat{\mathcal{A}}\right)  \times_{3}\boldsymbol{U}_{k_{1}}^{(3)} \right\| _{F}\\	
&\leq \mathbb{E}_{\Omega_{1}}\left\|\mathcal{A}-\mathcal{A}\times_{3}\left(\boldsymbol{U}_{k_{1}}^{(3)}\boldsymbol{U}_{k_{1}}^{(3)T}\right)\right\| _{F}+\mathbb{E}_{\Omega_{1},\boldsymbol{\Omega}_{2}}\left\|\widehat{\mathcal{A}}\times_{3}\boldsymbol{U}_{k_{1}}^{(3)}-  \left(  \mathcal{U}_{k_{2}}*_{3}  \mathcal{U}_{k_{2}}^{T}*_{3} \widehat{\mathcal{A}}\right)  \times_{3}\boldsymbol{U}_{k_{1}}^{(3)}  \right\| _{F},	
\end{aligned}
\end{equation*}
where we have used the fact that the first part does not depend on the second random matrix $\boldsymbol{\Omega} _{2}$. We tackle two parts separately.\\
$\mathbf{Part\,\uppercase\expandafter{\romannumeral1}}:$
Using Theorem \ref{matrix error} and H{\"o}lder's inequality \cite[Theorem 23.10]{Jacod2012}, we can write the expected error of the first part directly
\begin{equation}\label{part1}
\begin{aligned}
\mathbb{E}_{\Omega_{1}}\left\|\mathcal{A}-\mathcal{A}\times_{3}\left(\boldsymbol{U}_{k_{1}}^{(3)}\boldsymbol{U}_{k_{1}}^{(3)T}\right)\right\| _{F}
&\leq \left( \mathbb{E}_{\Omega_{1}}\left\|  \mathcal{A}\times_{3}\left( \boldsymbol{I}_{I_{3}}-\boldsymbol{U}_{k_{1}}^{(3)}\boldsymbol{U}_{k_{1}}^{(3)T}\right) \right\|^{2}  _{F}\right)^{\frac{1}{2}} \\
&=\left( \mathbb{E}_{\Omega_{1}}\left\|  \left( \boldsymbol{I}_{I_{3}}-\boldsymbol{U}_{k_{1}}^{(3)}\boldsymbol{U}_{k_{1}}^{(3)T}\right)\boldsymbol{A}_{(3)}\right\| ^{2} _{F}\right) ^{\frac{1}{2}}\\
&\leq\left[ \left(1+\frac{k_{1}}{p-1} \tau_{k_{1}}^{4 q_{0}}\right)\left(\sum_{i>k_{1}} \sigma_{i}^{2}\right)\right] ^{\frac{1}{2}}.
\end{aligned}
\end{equation}
By Theorem \ref{dengyu},  we can replace  $\sigma_{i}^{2}$ by $ \sum_{j\geq1}^{r_{2}}s_{jji}^{2}$.\\		
$\mathbf{Part\,\uppercase\expandafter{\romannumeral2}}:$
As for  the second part, since $\boldsymbol{U}_{k_{1}}^{(3)}$ has orthonormal columns,
\begin{equation*}
\left\|\widehat{\mathcal{A}}\times_{3}\boldsymbol{U}_{k_{1}}^{(3)}-  \left(\mathcal{U}_{k_{2}}*_{3} \mathcal{U}_{k_{2}}^{T}*_{3} \widehat{\mathcal{A}}\right) \times_{3}\boldsymbol{U}_{k_{1}}^{(3)}  \right\| _{F}\leq
\left\|	\widehat{\mathcal{A}}- \mathcal{U}_{k_{2}}*_{3} \mathcal{U}_{k_{2}}^{T}*_{3} \widehat{\mathcal{A}} \right\|^{2} _{F}.
\end{equation*}
Then, using Theorem \ref{matrix error} (keeping $\boldsymbol{\Omega}_{1}$ fixed) and the linearity of expectation, we have
\begin{equation*}
\begin{aligned}
\mathbb{E}_{\boldsymbol{\Omega}_{2}}\left\|\widehat{\mathcal{A}}- \mathcal{U}_{k_{2}}*_{3} \mathcal{U}_{k_{2}}^{T}*_{3} \widehat{\mathcal{A}} \right\|^{2} _{F}
&=\sum_{i=1}^{k_{1}}\mathbb{E}_{\Omega_{2i}}\left\|\left( \boldsymbol{I}_{I_{1}}- \mathcal{U}_{k_{2}}(:,:,i)\mathcal{U}_{k_{2}}^{T}(:,:,i)\right)\widehat{\sigma}_{i}\widehat{\boldsymbol{V}}_{i} \right\|^{2}_{F} \\
&\leq \sum_{i=1}^{k_{1}}\left(1+\frac{k_{2}}{p-1}\left(\tau_{k_{2i}}^{(i)}\right)^{4 q_{i}}\right)\left(\sum_{j>k_{2i}}^{r_{2}}(\widehat{\sigma}_{i}\sigma_{ij})^{2}\right),
\end{aligned}
\end{equation*}
where $\widehat{\sigma}_{i} $ is the $i$th singular value of the $\widehat{\boldsymbol{A}}_{(3)}$.

\hskip 2em We recall the definition of L{\"o}wner partial ordering \cite[Section 7.7]{Johnson2012}. Let $\boldsymbol{A}$, $\boldsymbol{B} \in \mathbb{R}^ {n\times n}$ be Hermitian; $\boldsymbol{A} \preceq \boldsymbol{B}$ means $\boldsymbol{B} - \boldsymbol{A}$ is positive semi-definite. Furthermore, $\lambda_{i}(\boldsymbol{A}) \leq \lambda_{i} (\boldsymbol{B})$ for $i = 1,2,\ldots, n$, where $\lambda$ is the eigenvalue of the matrix. Notice $\boldsymbol{U}_{k_{1}}^{(3)}\boldsymbol{U}_{k_{1}}^{(3)T} $ is  a projector so that
\begin{equation*}
\widehat{\boldsymbol{A}}_{(3)}^{T}\widehat{\boldsymbol{A}}_{(3)}=\left( \boldsymbol{U}_{k_{1}}^{(3)T}\boldsymbol{A}_{(3)}\right) ^{T}\left( \boldsymbol{U}_{k_{1}}^{(3)T}\boldsymbol{A}_{(3)}\right) \preceq \boldsymbol{A}_{(3)}^{T}\boldsymbol{A}_{(3)},
\end{equation*}
and the singular values of $\widehat{\boldsymbol{A}}_{(3)}$ satisfy 
\begin{equation*}
\widehat{\sigma}_{i} \leq \sigma_{i} ,\quad\text{for}\qquad i=1,2,\ldots,k_{1}.
\end{equation*}
Applying H{\"o}lder's inequality gives
\begin{equation}\label{part2}
\mathbb{E}_{\boldsymbol{\Omega}_{2}}\left\|\widehat{\mathcal{A}}- \mathcal{U}_{k_{2}}*_{3} \mathcal{U}_{k_{2}}^{T}*_{3} \widehat{\mathcal{A}} \right\| _{F}\leq \left[ \sum_{i=1}^{k_{1}}\left(1+\frac{k_{2i}}{p-1}\left(\tau_{k_{2i}}^{(i)}\right)^{4 q_{i}}\right)\left(\sum_{j>k_{2i}}^{r_{2}}(\sigma_{i}\sigma_{ij})^{2} )\right)\right]^{\frac{1}{2}}.
\end{equation}
Combining (\ref{part1}), (\ref{part2}) and $s_{jji}=\sigma_{i}\sigma_{ij}$ gives the conclusion.
\end{proof}
\subsection{Computational  Complexity and Memory  Cost}
\hskip 2em  We now discuss the computational cost of Algorithm \ref{TO-SVD} and Algorithm \ref{alg:ROSVD} {\color{blue},} and compare them  against the O-SVD, the RT-SVD proposed by Zhang \emph{et al}.\cite[Algorithm 6]{zhang2018} and the R-HOSVD proposed by Minster \emph{et al}.\cite[Algorithm 3.1]{2020Randomized}. We assume that the tensors are dense and the target truncation terms $k_{1}$ and $k_{2}$ are sufficiently small, i.e., $k_{1} \ll r_{1}$, $k_{2} \ll r_{2}$, so that we can neglect the computational cost of the QR factorization and the truncation steps of the R-SVD algorithm. The dominant cost of Algorithm \ref{alg:ROSVD} lies in  computing  a total of $k_{1}+1$ R-SVD, while the TO-SVD algorithm requires to compute the full SVD, which results in an expensive computational cost.

\hskip 2em Recall that the RT-SVD algorithm consists of transforming the tensor to the Fourier domain and applying the R-SVD to each frontal slice of the tensor. The R-HOSVD algorithm has three main steps including  multiplying each mode unfolding with a Gaussian random  matrix, computing an approximation to the column space and then forming the core tensor. The storage and computational cost of the TO-SVD,  RO-SVD, RT-SVD and RHOSVD algorithms are summarized in Table \ref{cost}. Each algorithm takes a core tensor $\mathcal{S}$ of the same size except the RT-SVD. The table includes the costs for a general third-order tensor $\mathcal{A} \in \mathbb{R}^{I_{1} \times I_{2} \times I_{3}}$ with the first iteration parameter $q_{0}$ and the iteration vector $\boldsymbol{q}=\left(q_{1},q_{2},\ldots q_{k_{1}} \right)$ ($k_{1}=I_{3}$ for the RT-SVD, target rank $\boldsymbol{k}_{3}=(k_{2},k_{2},k_{1})$ for the R-HOSVD). For a more intuitive comparison of the computational cost we assume that $k_{21}=\cdots=k_{2k_{1}}=k_{2}$. Since by assumption $k_{1} \ll r_{1}$, $k_{2} \ll r_{2}$, the RO-SVD is expected to be much faster than all four other algorithms.
\begin{table} [h]
	\centering
	\caption{Comparison of O-SVD, TO-SVD, RO-SVD, RT-SVD, R-HOSVD}
	\label{cost}
	\begin{tabular}{c|c|c}
		\hline
{Algorithm} & Computational Cost & Storage Cost \\
                 \hline
  O-SVD &$\mathcal{O}\left(I_{1}I_{2}I^{2}_{3}+R_{3}I_{1}I_{2}I_{3}+R_{3}I_{1}I^{2}_{2}\right)$&$R_{3}(I_{2}I_{1}+I_{2}I_{2}+I_{3}+I_{2})$ \\
                    \hline
TO-SVD &$\mathcal{O}\left(I_{1}I_{2}I^{2}_{3}+k_{1}I_{1}I^{2}_{2}\right)$&$k_{1}k_{2}I_{1}+k_{1}k_{2}I_{2}+k_{1}I_{3}+k_{1}k_{2}$ \\
	\hline
RO-SVD &$\mathcal{O}\left( (2q_{0}+1)k_{1}I_{1}I_{2}I_{3}+\sum_{i=1}^{k_{1}}(2q_{i}+1)k_{2}I_{1}I_{2}\right)$ &$k_{1}k_{2}I_{1}+k_{1}k_{2}I_{2}+k_{1}I_{3}+k_{1}k_{2}$ \\
		\hline
RT-SVD  &$\mathcal{O}\left(I_{1}I_{2}I_{3} \log I_{3}+\sum_{i=1}^{I_{3}}(2q_{i}+1)k_{2}I_{1}I_{2} \right)$ &$k_{2}(I_{1}I_{3}+I_{2}I_{3}+I_{3})$ \\
		\hline
R-HOSVD &$\mathcal{O}(\sum_{i=1}^{3}(2q_{i}+1)k_{3i}I_{1}I_{2}I_{3}+k_{1}I_{1}I_{2}+k_{1}k_{2}I_{2}+k_{1}k_{2}^{2})$ &$k_{1}k_{2}^{2}+k_{2}(I_{1}+I_{2})+I_{3}k_{1}$ \\
		\hline
		\end{tabular}
	\end{table}

\section{Numerical  Examples}\label{Numericalsection}
\hskip 2em In order to evaluate Algorithm \ref{TO-SVD} and Algorithm \ref{alg:ROSVD}, we present the numerical results by comparing them with truncation methods such as  TT-SVD, THOSVD and the above mentioned randomized algorithm of tensor singular value decomposition: RT-SVD, RHOSVD.  All the computations are based on the Matlab Tensor Toolbox \cite{TensorToolbox2017} and Tensor-Tensor Product Toolbox\cite{Toolbox2018}. Our results were run in Matlab R2020b on a Lenovo computer with AMD Ryzen 5 3500U processor and 12 GB RAM. We use the HOSVD algorithm to show that the tensor is orientation-dependent.
 To the best of our knowledge, there is no specific approach for the selection of all truncation parameters in fixed-rank random tensor algorithms. The  oriented tensors have high correlation among frontal slices and we can estimate the rank of the orthonormal matrix in the RO-SVD in advance. We could set $k_{1}$ much smaller than $I_{3}$. The optimal value of $k_{1}$ is the number of a basis in the space spanned by all frontal slices of the third-order oriented  tensor.
 Since both the TT-SVD and the RT-SVD use the same truncation parameters $k_{2}$ for each frontal slice, we also use the same truncation parameters $k_{21}=\cdots=k_{2k_{1}}=k_{2}$ for TO-SVD and RO-SVD in the experiments. In addition, the computational time of each method was measured in seconds. The results of each experiment were averaged three times.

\hskip 2em We employ four indices, i.e., the relative error, the compression ratio, the peak-signal-to-noise ratio (PSNR) and the structural similarity index (SSIM), to evaluate the performance of image compression algorithm. If $\mathcal{A} \in \mathbb{R}^{I_{1} \times I_{2} \times I_{3}}$ is the original tensor and $\widehat{ \mathcal{A}}$ is a low-rank approximation of $\mathcal{A}$, then the relative approximation error  and the compression ratio are respectively given by
\begin{equation*}
\mathrm{Err}=\frac{\left\| \mathcal{A}-\widehat{ \mathcal{A}}\right\|_{F} }{\left\| \mathcal{A} \right\|_{F} }\,\text{and}\,\mathrm{Ratio}=\frac{I_{1} \times I_{2} \times I_{3}}{\mathrm{storage}\,\mathrm{cost}}.
\end{equation*}
The PSNR is given by:
$$
\mathrm{P S N R}=20 \log _{10} \frac{\max (\mathcal{A})}{\sqrt{\mathrm{M S E}}},
$$
where the  MSE of tensor $\mathcal{A}$ is as follows:
$$
\mathrm{M S E}=\frac{\|\mathcal{A}-\hat{\mathcal{A}}\|_{F}^{2}}{I_{1} \times I_{2} \times I_{3}}.
$$
According to research, a PSNR value above 40 for the pixel component of an image is an indication of very good quality (i.e., the restored frame is very close to the original frame). If the PSNR is between 30 and 40, then the image quality is usually good (i.e., the distortion in the restored image is noticeable but still acceptable). If it is between 20 and 30 then the image quality is poor, and finally, images with a PSNR below 20 are not acceptable. SSIM \cite{image2004} measures the similarity between the original image and the reconstructed image on structural consistency. The equation of SSIM is  below:
$$
\mathrm{SSIM}(a, \hat{a})=\frac{\left(2 \mu_{a} \mu_{\hat{a}}+C_{1}\right)\left(2 \sigma_{a \hat{a}}+C_{2}\right)}
{\left(\mu_{a}^{2}+\mu_{\hat{a}}^{2}+C_{1}\right)\left(\sigma_{a}^{2}+\sigma_{\hat{a}}^{2}+C_{2}\right)},
$$
where $\mu_{a}$ and $\mu_{\hat{a}}$ are mean intensities, $\sigma_{a}$ and $\sigma_{\hat{a}}$ are standard deviations, $C_{1}$ and $C_{2}$ are default values. Covariance $\sigma_{a\hat{a}}$ is calculated as follows:
$$
\sigma_{a \hat{a}}=\frac{1}{I_{1} I_{2} I_{3}-1} \sum_{ijk}^{}\left(a_{ijk}-\mu_{a}\right)\left(\hat{a}_{ijk}-\mu_{\hat{a}}\right).
$$
Obviously, the SSIM is a number between 0 and 1. The larger the SSIM, the smaller the difference between the two images.

\subsection{Hyperspectral  Image}
\hskip 2em In this subsection, we test a hyperspectral image— Salinas\cite{Indianpines}. This scene was collected by the 224-band AVIRIS sensor over Salinas Valley, California, and is characterized by high spatial resolution (3.7-meter pixels). The area covered comprises $512$ lines by $217$ samples 224 available spectral reflectance bands in the wavelength range. Hence, the size of the resulting tensor is $512\times217\times224$. Denote by $\mathcal{A}$ the tensor of the testing data. Under the relative error tolerance 0.005, (Matlab command: $\mathrm{hosvd}\left( \mathcal{A},0.005\right)$), the size of the nonzero part of the core tensor is  $423\times203\times 32$. Hence, this is a well-oriented tensor. And the running time of a full O-SVD is 6.2432s.

\hskip 2em  We take the first iteration parameter $q_{0}=1$, the target truncation term $k_{2}$ fixed at 80, the oversampling parameter $p=5$ and the second iteration parameters $q_{i}=1$. The relative errors for varying $k_{1}$ between 5 and 40 are plotted in Figure \ref{k1duibi}, where we can see the errors of the RO-SVD are quite close to the TO-SVD. Since $\mathcal{A}$ is a well-oriented tensor, the errors level off after $k_{1}=30$. The errors of TO-SVD and TT-SVD are very close, indicating that it is possible to compress the third dimension of  $\mathcal{A}$ to $k_{1}$ without being affected. The error of THOSVD is larger for the same  size of core tensor.

\hskip 2em  Then, we choose  $k_{1}$ to be 35 and allow $k_{2}$ to vary between 30 and 100 for simplicity. We track the relative errors of the RO-SVD for the case that the iteration parameters $q_{i}$ ($i=1,2, \ldots, k_{1}$) are equal, i.e., $q_{1}= \cdots =q_{k_{1}}=q$. Figure \ref{k2duibi} shows that the errors for varying $q$ have similar convergence trajectories and $q=1$ is enough for practical use.  Moreover, as $q$ increases, a much more accurate approximation is yielded.
In addition, we give error curves for TT-SVD, THOSVD and their corresponding randomized algorithms with $q = 1$. It is shown that under the same conditions, the tensor singular value decomposition  based on the Tucker product has a much larger error than the other two tensor decompositions, both for the exact and the randomized algorithms.

\hskip 2em Figure \ref{ratiobutong} shows the time, relative error, PSNR and SSIM of the three randomized algorithms at different compression ratios.
We choose  $k_{1}$ to be 35, the oversampling parameter $p=5$, all the iteration parameters $q=1$ and $k_{2}$ varies with the compression ratio.
Figure \ref{ratiobutong} shows that the RO-SVD outperforms the RT-SVD in all four indices.
In Table \ref{ratio}, we record the time, relative error, PSNR and SSIM of four algorithms (TO-SVD, RO-SVD, RT-SVD,  and RHOSVD) at $Ratio=40$.
From  Figure \ref{ratiobutong} and Table \ref{ratio},  we find that the proposed randomized algorithm has similar results to the exact algorithm,
illustrating the effectiveness of the RO-SVD. As for the running time, the RO-SVD is usually much faster than TO-SVD and O-SVD respectively.
Compared with other randomized algorithms, the PSNR and SSIM of RHOSVD and RO-SVD are within the acceptable range, but the former about 6 times longer than the latter when $Ratio=40$. As both the time and relative error of  the RO-SVD are reduced by half compared to the RT-SVD, the RO-SVD is superior to  the RT-SVD at the same compression ratio.
This  gives us a suggestion for choosing $k_{2}$: in addition to setting the value of $k_{2}$ directly, we can also set the parameter $k_{2}$ according to the desired compression ratio.
\begin{figure}[htbp]
	\begin{minipage}[t]{0.45\linewidth}
		\centering
			\includegraphics[scale=0.45]{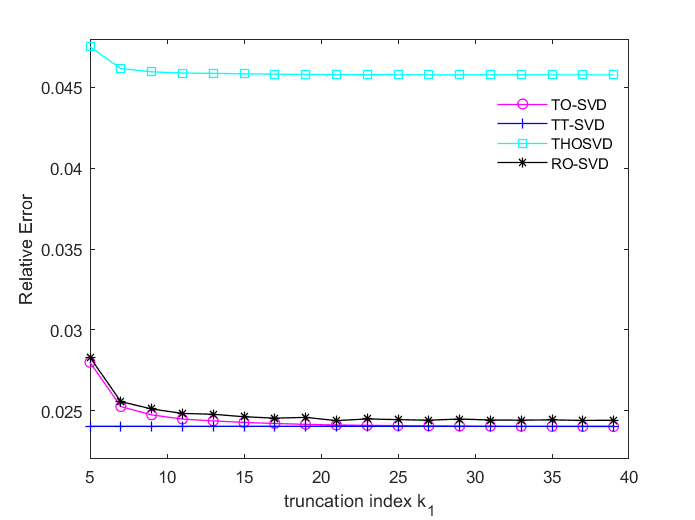}
		\caption{Relative errors of four algorithms with $k_{2}=80$}
		\label{k1duibi}
	\end{minipage}%
	\quad
	\begin{minipage}[t]{0.45\linewidth}
		\centering
			\includegraphics[scale=0.45]{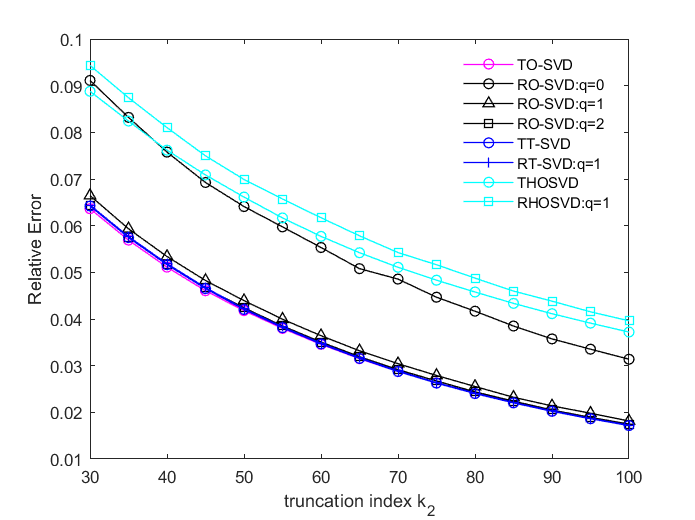}
		\caption{Relative errors of four algorithms with $k_{1}=35$}
		\label{k2duibi}
	\end{minipage}
\end{figure}

\begin{figure}[htbp]
	\centering
	\subfigure[Time]{
		\includegraphics[width=7.5cm]{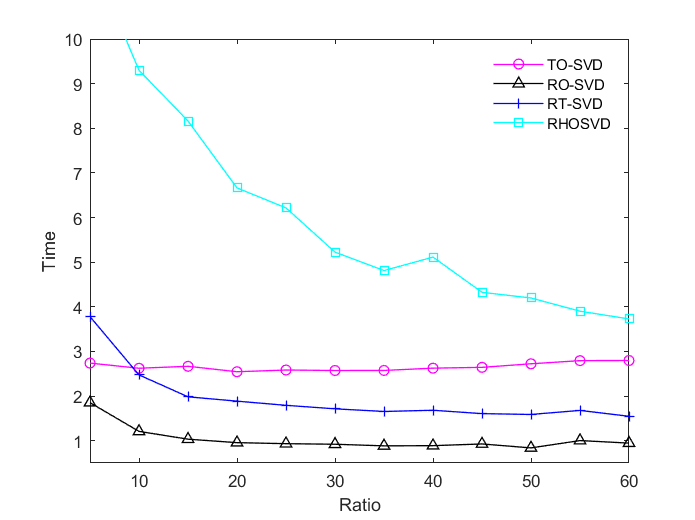}
	}
	\quad
	\subfigure[Relative errors]{
		\includegraphics[width=7.5cm]{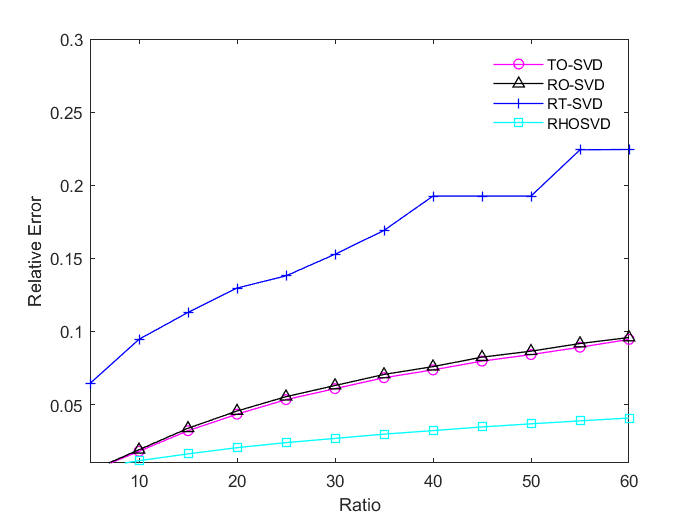}
	}
	\quad
	\subfigure[PSNR]{
		\includegraphics[width=7.5cm]{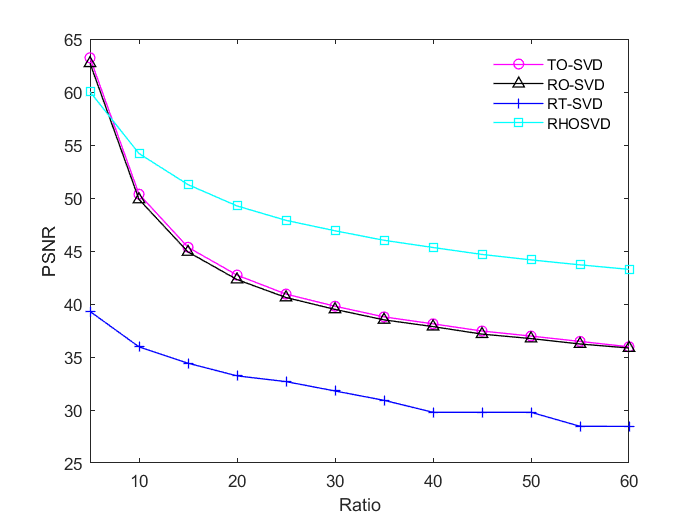}
	}
	\quad
	\subfigure[SSIM]{
		\includegraphics[width=7.5cm]{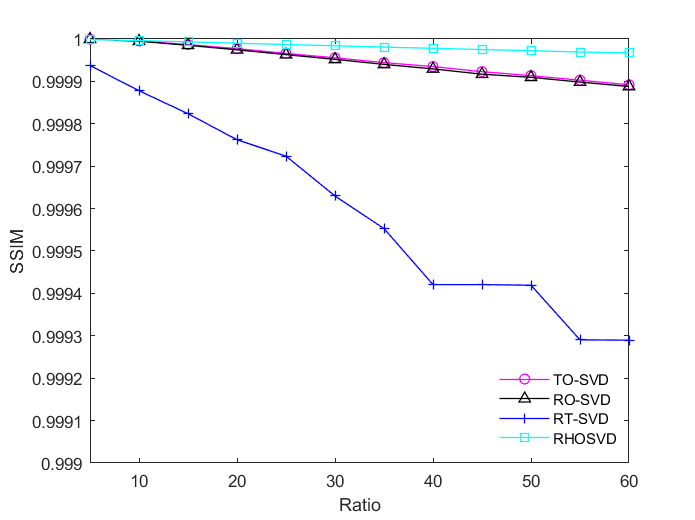}
	}
	\caption{Comparison results of  four algorithms (TO-SVD, RO-SVD, RT-SVD,  and RHOSVD) on Salinas.}
	\label{ratiobutong}
\end{figure}

\begin{table} [htbp]
\centering
\caption{ Comparison of four algorithms with  $Ratio=40$ }
\label{ratio}
\begin{tabular}{ccccc}
			\hline
Algorithm  & time& Err&PSNR&SSIM\\
		\hline
RT-SVD &1.6819 &0.1926  & 29.7846&0.9994 \\
			\hline
RHOSVD  &5.1130 & 0.0322 &45.3095 & 0.9999           \\
			\hline
TOSVD   &2.6429 & 0.0737 &38.1286 & 0.9999             \\
	                     \hline
RO-SVD   & 0.8886& 0.0758 &37.8799&   0.9999       \\		
                          	\hline
\end{tabular}
\end{table}

\subsection{Video}
\begin{figure}[htbp]
	\centering
	\subfigure[Time]{
		\includegraphics[width=7.5cm]{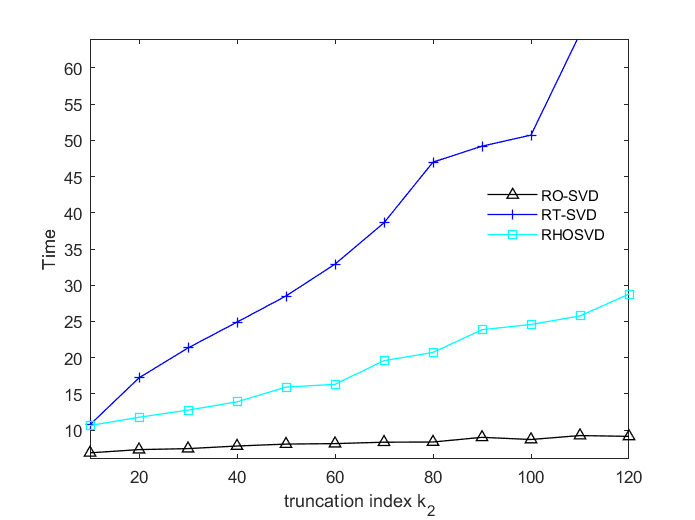}
	}
	\quad
	\subfigure[Relative errors]{
		\includegraphics[width=7.5cm]{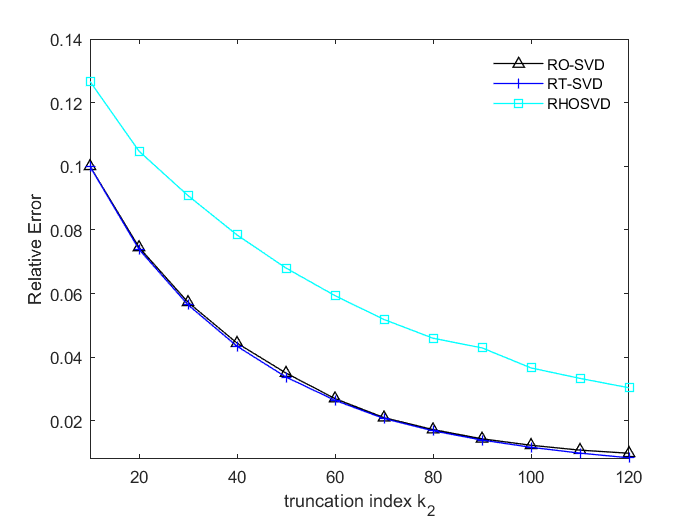}
	}
	\quad
	\subfigure[PSNR]{
		\includegraphics[width=7.5cm]{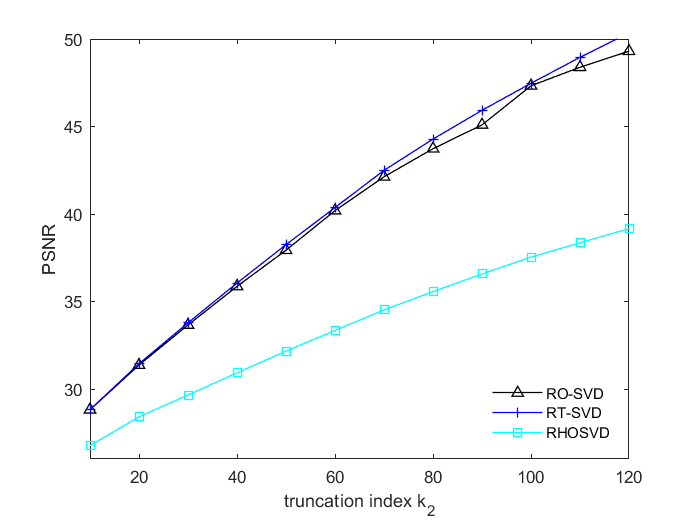}
	}
	\quad
	\subfigure[SSIM]{
		\includegraphics[width=7.5cm]{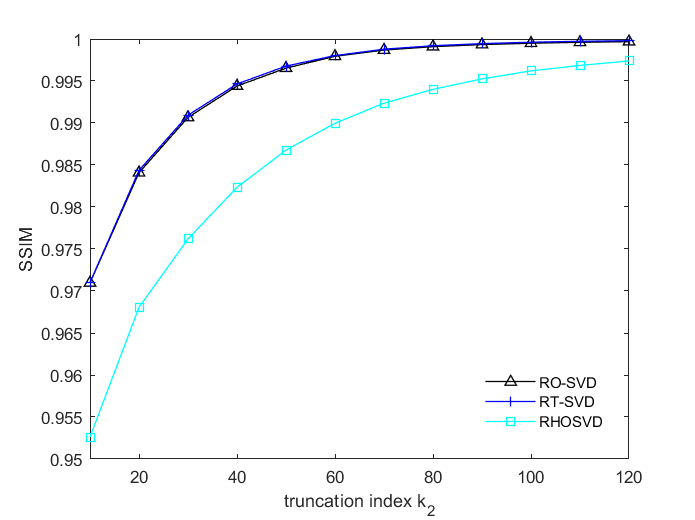}
	}
	\caption{Comparison results of three tensor randomized methods (RO-SVD, RT-SVD, and RHOSVD) on the video.}
	\label{k2butong}
\end{figure}
\begin{figure}[htbp]
	\centering
	\subfigure[Original video]{
		\includegraphics[width=7.5cm]{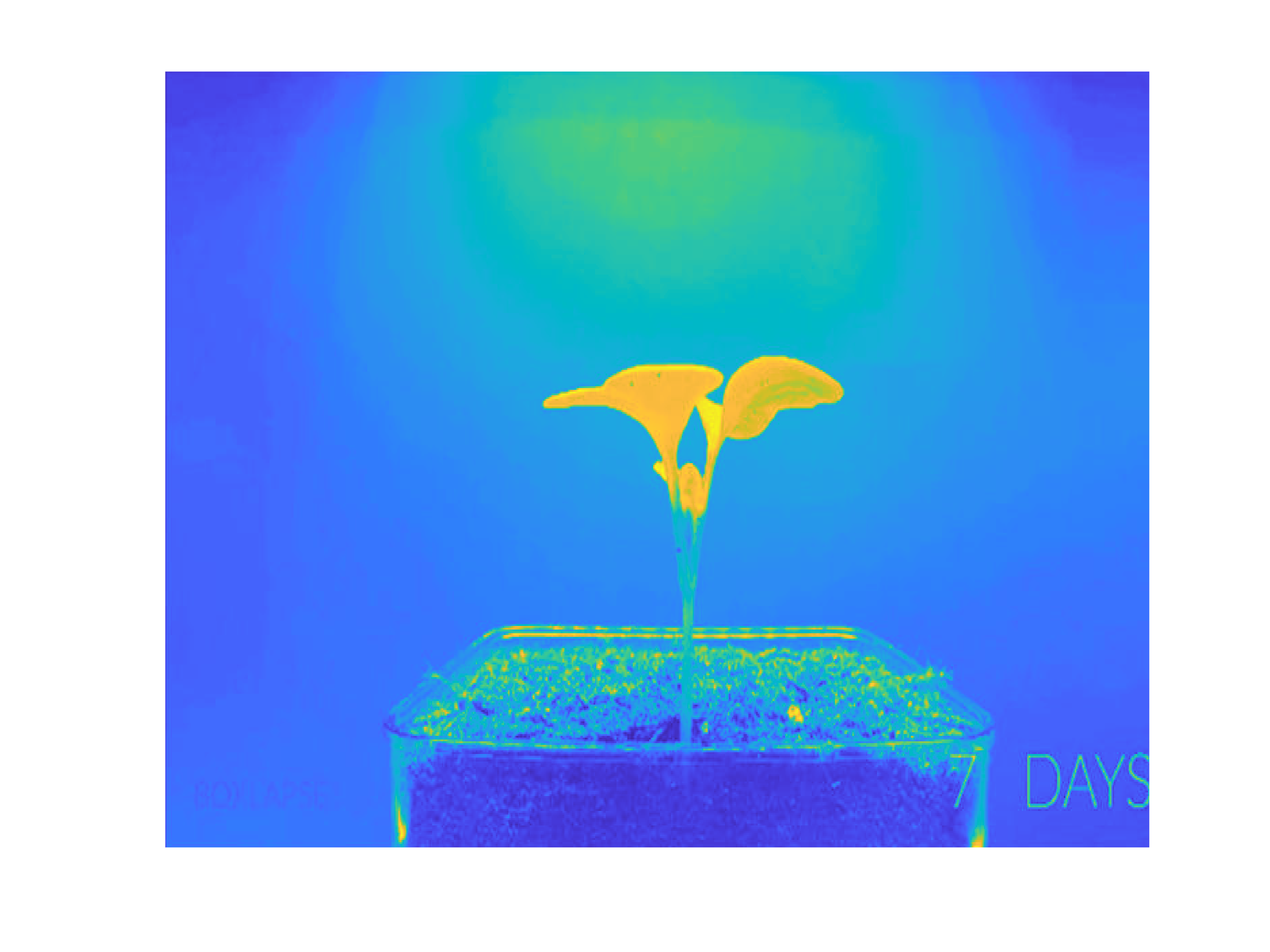}
	}
	\quad
	\subfigure[RT-SVD ]{
		\includegraphics[width=7.5cm]{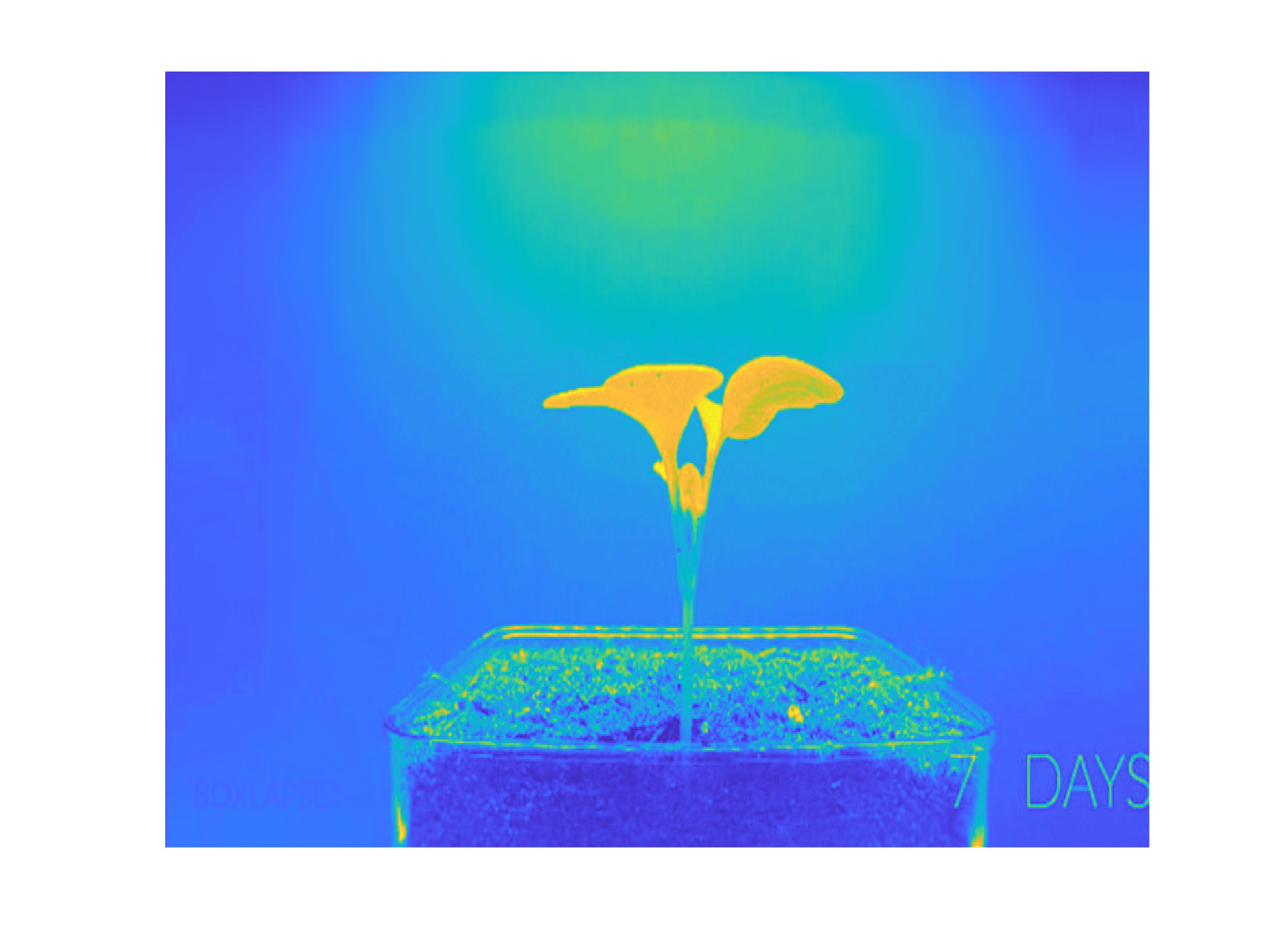}
	}
	\quad
	\subfigure[RHOSVD ]{
		\includegraphics[width=7.5cm]{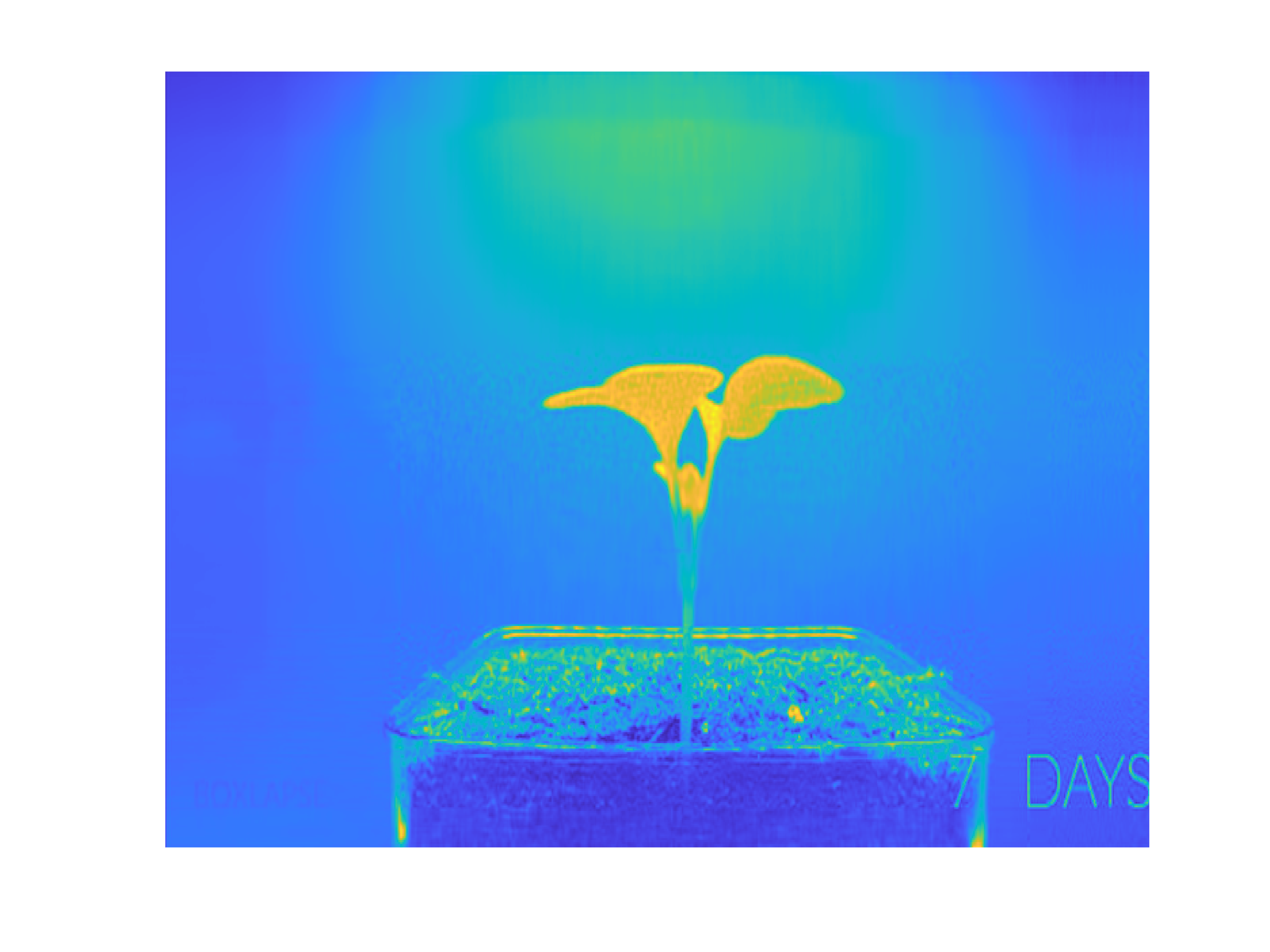}
	}
	\quad
	\subfigure[RO-SVD ]{
		\includegraphics[width=7.5cm]{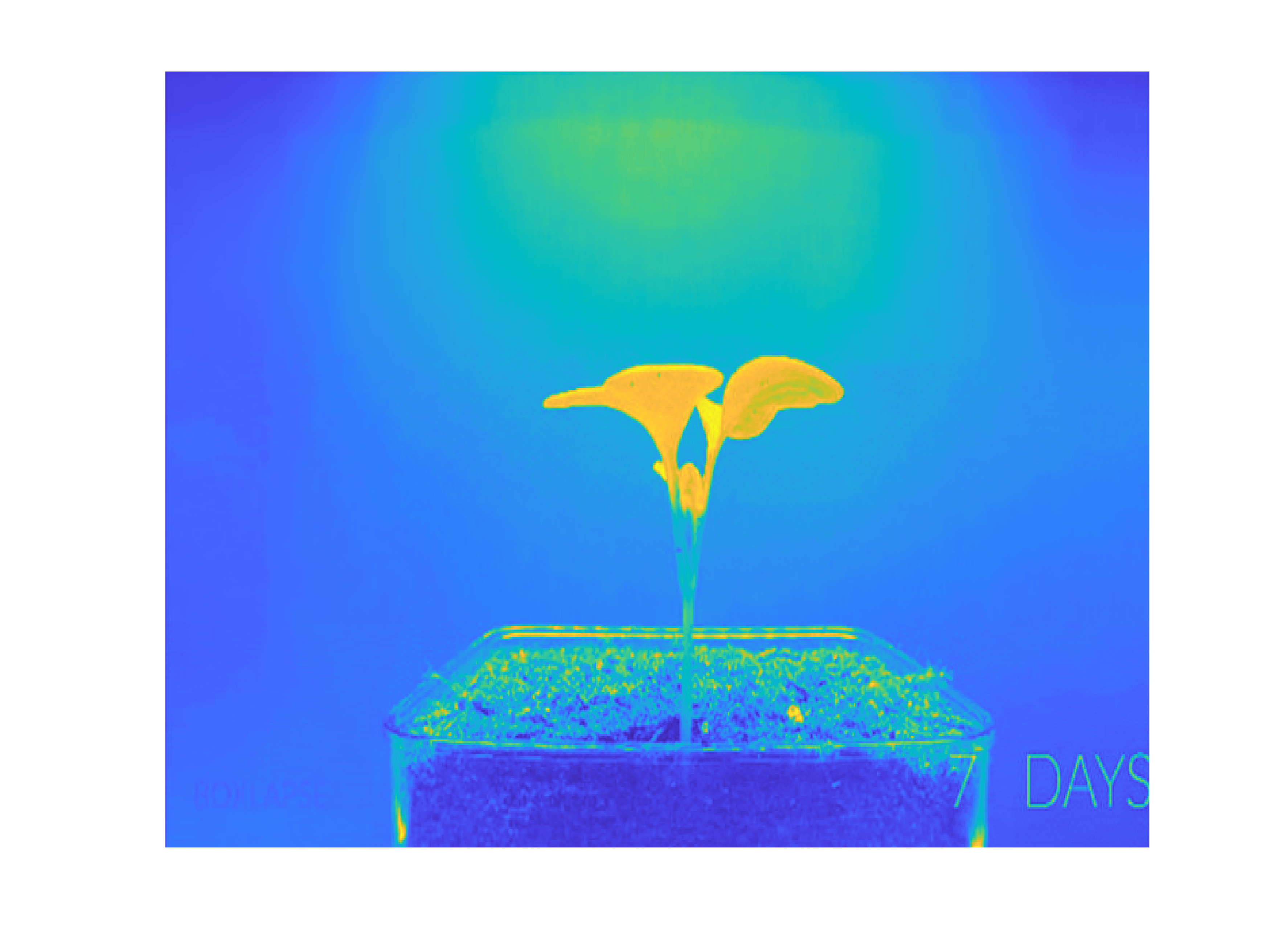}
	}
	\caption{Comparison results of three tensor randomized methods (RO-SVD, RT-SVD, and RHOSVD) with the original image}
	\label{tupian2}
\end{figure}
\begin{table}[htbp]
	\centering
	\caption{ Comparison of RT-SVD , RHOSVD and RO-SVD  with $k_{1}=70$ , $k_{2}=90$ }
	\label{duibi}
	\begin{tabular}{ccccc}
		\hline
		Algorithm  & time& Err&PSNR&SSIM \\
		\hline
		RT-SVD&49.1988 &0.0139  & 45.5254&0.9993 \\
		\hline
		RHOSVD  &23.8538  &0.0424&36.5753 & 0.9953          \\
		\hline
		RO-SVD & 8.9908 & 0.0144&45.0915 &   0.9993       \\		
		\hline
	\end{tabular}
\end{table}
\hskip 2em The second example is a video. The dataset $\mathcal{B}$ is a  video from Tencent$\footnote{https://v.qq.com/x/page/v3237ztwzs7.html}$, with size $ 424\times 726 \times 500$ (height$\times$width$\times$frames). See Figure \ref{tupian2}(a). Most regions of the frames are stable, while a radish is growing delicately. Under the relative error tolerance 0.01, the size of the nonzero part of the core tensor is  $198 \times 351 \times 71$, which is a well-oriented tensor. And the running time of a full O-SVD is 58.6212s.

 \hskip 2em We run three tensor randomized methods (RO-SVD, RT-SVD, and RHOSVD) with increasing the same target truncation term $k_{2}$ and oversampling parameter $p=5$ while setting the target rank $k_{1}=70$. Here we only show the  results with the power parameter $q = 1$, as for many applications this already achieves sufficient accuracy. We compare the time, relative error, PSNR and SSIM  of the RT-SVD, the RHOSVD and the RO-SVD algorithms with different target truncation term $k_{2}$, which are shown in Figure \ref{k2butong}. In Table \ref{duibi}, we record the results of  three algorithms at $k_{2}=90$ from Figure \ref{k2butong}. From the results in Figure \ref{k2butong}(a) and (b),
 we see that the RO-SVD algorithms is far superior to them in terms of time cost while keeping the accuracy and  its advantage  becomes more apparent with the increase of  $k_{2}$. Both the RT-SVD and the RO-SVD algorithm perform better than RHOSVD, as shown in Figure \ref{k2butong}(c) and (d). In Table \ref{duibi},
we can see the RO-SVD is about 6 times  faster than the implementation of RT-SVD, and the compression ratio of the former is 7.1 times (21.12 vs. 2.97) better than that of the latter. Figures \ref{tupian2}(b), \ref{tupian2}(c) and \ref{tupian2}(d) show the results of the RT-SVD, the RHOSVD and the RO-SVD respectively when the target truncation term $k_{2} = 90$. We can see the performance of the RO-SVD is the best with the same target truncation term $k_{2}$ while it has significantly lower computational cost.

\subsection{ Synthetic  Oriented Tensor}

\hskip 2em  For further reflection on the characteristics of the RO-SVD for large-scale oriented tensors, two kinds of synthetic tensors are tested standing for different distribution patterns of singular values:\\
 $\bullet$ Tensor 1 (slow decay): $\mathcal{A}=\left(\mathcal{U} *_{3} \mathcal{S} *_{3} \mathcal{V}\right) \times_{3} \boldsymbol{U}^{(3)}$, where $\boldsymbol{U}^{(3)}$, $\mathcal{U}(:,:,i)$ and $\mathcal{V}(:,:,i)^T$ are randomly drawn
  matrices with orthonormal columns, and the diagonal matrix $\mathcal{S}(:,:,i)$ has diagonal elements $\sigma_{jji} =1/(i+j)^{2}$.
  Furthermore, if $\boldsymbol{U}^{(3)}$ is a matrix with far fewer columns than rows, the resulting tensor $\mathcal{A}$ has to be a well-oriented tensor.
  \\
 $\bullet$ Tensor 2 (fast decay): $\mathcal{A}$ is formed just like Tensor 1, but the diagonal
    elements of $\mathcal{S}(:,:,i)$ are given by $\sigma_{jji} =e^{-j-i/7}$. It reflects a fast decay of singular values.

 \hskip 2em For each kind, we first generate a $1000 \times 1000 \times 300$ oriented tensor with $\mathtt{rank}_{3}(\mathcal{A})= 30$,
 for which we compare the relative errors and time of the proposed techniques for different $\boldsymbol{k}_{2}$ and $q$.
 Let $k_{1} = 30$, $p = 5$ and $k_{2i}$ be randomly generated positive integers in $[0,20], [20,40], \ldots, [180, 200]$, respectively.
 The  results are plotted in Figure \ref{tensor12}.
 We observe that the RO-SVD runs at approximately three times the speed of the TO-SVD within an acceptable error margin.
 By the power scheme,  the errors of the RO-SVD can be remarkably reduced.
 We notice that $q = 1$ suffices in practice since it produces indistinguishable results with $q=2$.

 \hskip 2em Then, we compare the performance of the RT-SVD, RHOSVD and RO-SVD algorithms
 with equal $k_{2i},$ oversampling parameter $p = 5$ and power parameter $q = 1$
 while setting the target rank $k_{1} = 30$, which are shown in Figure \ref{tensor12OTH}.
 It is obvious that the RO-SVD shows advantages in both  accuracy and time cost.
 Specifically, our algorithm takes roughly one-sixth of the time required for the other two fixed-rank randomized algorithms.
 Moreover, the RO-SVD achieves better compression ratio compared to the RT-SVD with $k_{2i} = 80$ (62.35 vs 6.24).

  \hskip 2em Finally, we conduct an experiment to compare the efficiency of the proposed algorithms with varying sizes. We generate oriented tensors with $\mathtt{rank}_{3}(\mathcal{A})=I_{3}/10 $ and summarize the  running time and relative errors of the five algorithms for different dimensions in Table \ref{hechengduibi}. The RO-SVD shows overwhelming advantages in computational efficiency.
  Specially, when the tensor size comes to $1000 \times 1000 \times 400$, all the other existing algorithms  lose competitiveness because of timeouts and  memory limits.
 \begin{table}[htbp]
 	\centering
 	\caption{ Comparison of five algorithms for  different $\mathcal{A}$ \\
 		with $k_{1}=I_{3}/10$, $k_{2}=200$, $q=1$ and $p=5$}
 	\label{hechengduibi}
 	\begin{tabular}{cccccccccc}
 		\hline
 		 Algorithm &O-SVD& \multicolumn{2}{ c }{TO-SVD}&\multicolumn{2}{ c }{RO-SVD}&\multicolumn{2}{ c }{RT-SVD}&\multicolumn{2}{ c }{RHOSVD} \\
 		\hline
 		
                 $I_{1} \times I_{2} \times I_{3}$ 	&time	&time&Err&time&Err&time&Err&time&Err
 		\\
 		\hline
 	 $1000 \times 1000 \times 100$  &11.66&9.60&0.0086&3.17&0.0092&45.25&0.0242&16.24&0.0441\\
 		\hline
 	 $1500 \times 1500 \times 100$   	&31.48&26.47&0.0108&7.45&0.0115&102.04&0.0265&41.08&0.0452       \\
 		\hline
 	 $2000 \times 2000 \times 100$  	&---	&67.67&0.0105&27.89&0.0111&168.73&0.0271&95.35&0.0456      \\		
 		\hline
 		 $1000 \times 1000 \times 300$  	&63.44&53.80&0.0125&23.98&0.0133&142.16&0.0337&75.33&0.0923\\
 		 \hline
 		  $1000 \times 1000 \times 400$  	&---	&---&---&30.33&0.0146&---&---&---&---\\
 		  \hline
 	\end{tabular}
 \end{table}

  \begin{figure}[htbp]
  	\centering
  	\subfigure[Time of Tensor 1]{
  		\includegraphics[width=7.5cm]{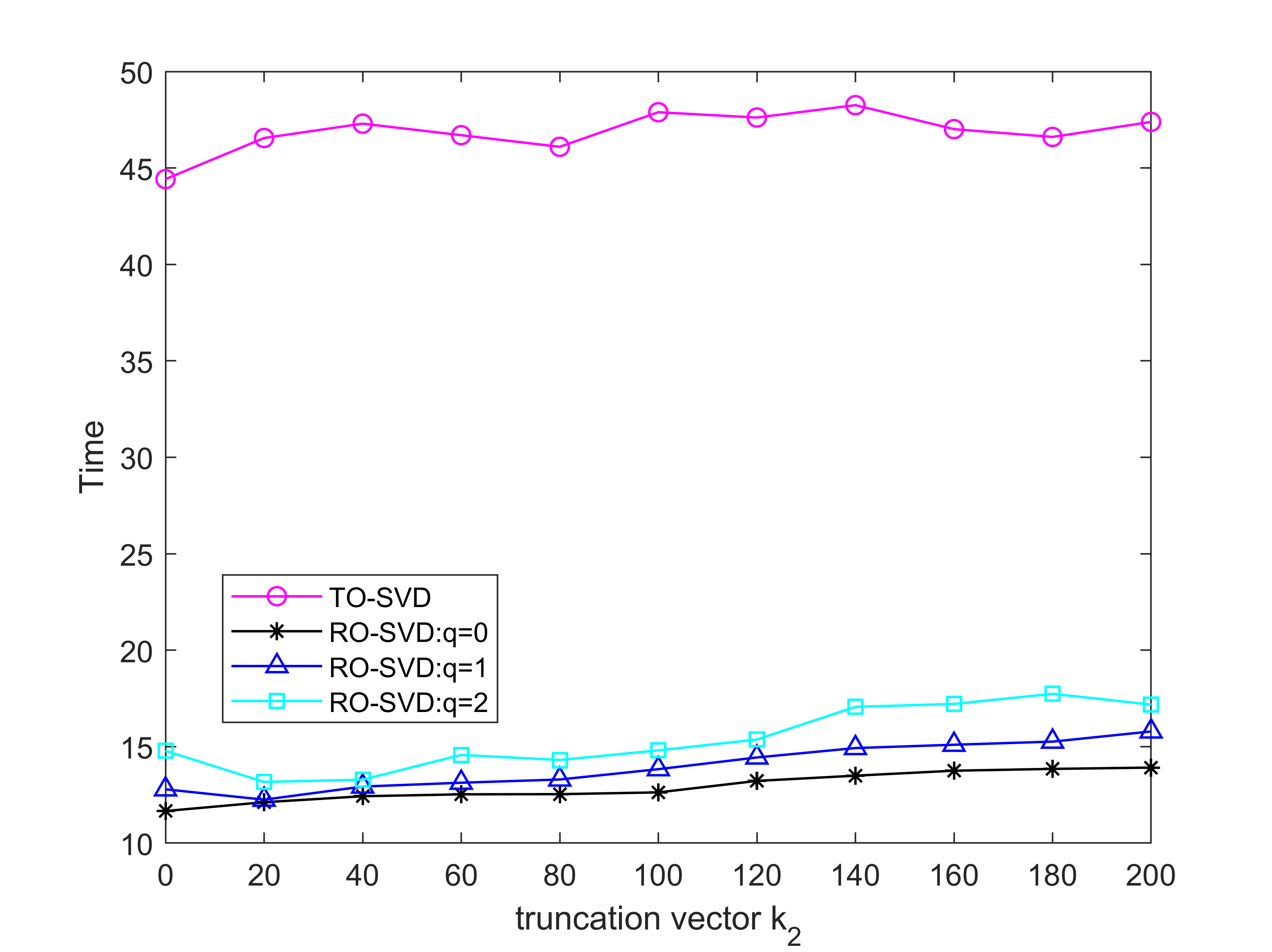}
  	}
  	\quad
  	\subfigure[Relative errors of Tensor 1 ]{
  		\includegraphics[width=7.5cm]{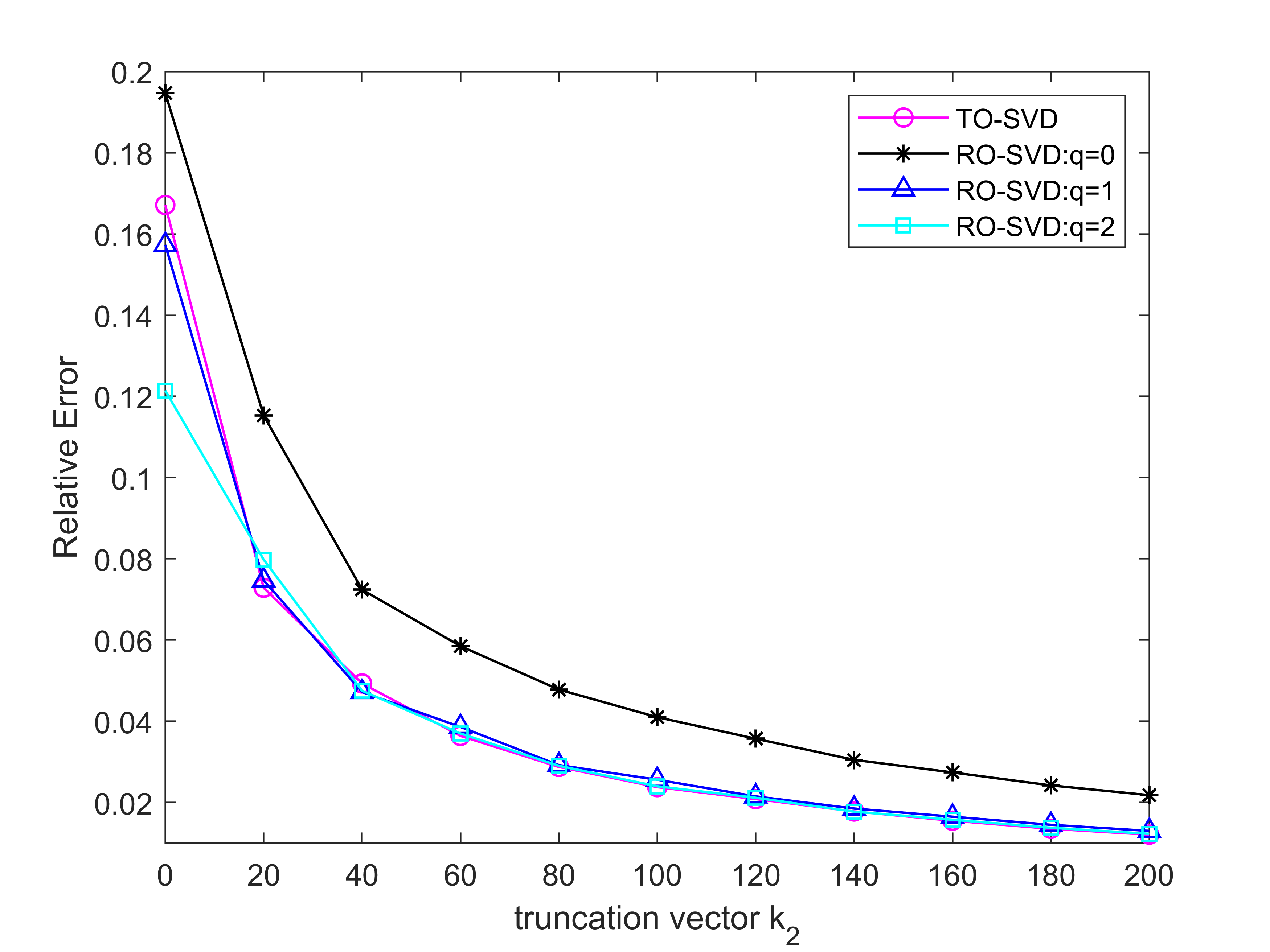}
  	}
  	\quad
  	\subfigure[Time of Tensor 2]{
  		\includegraphics[width=7.5cm]{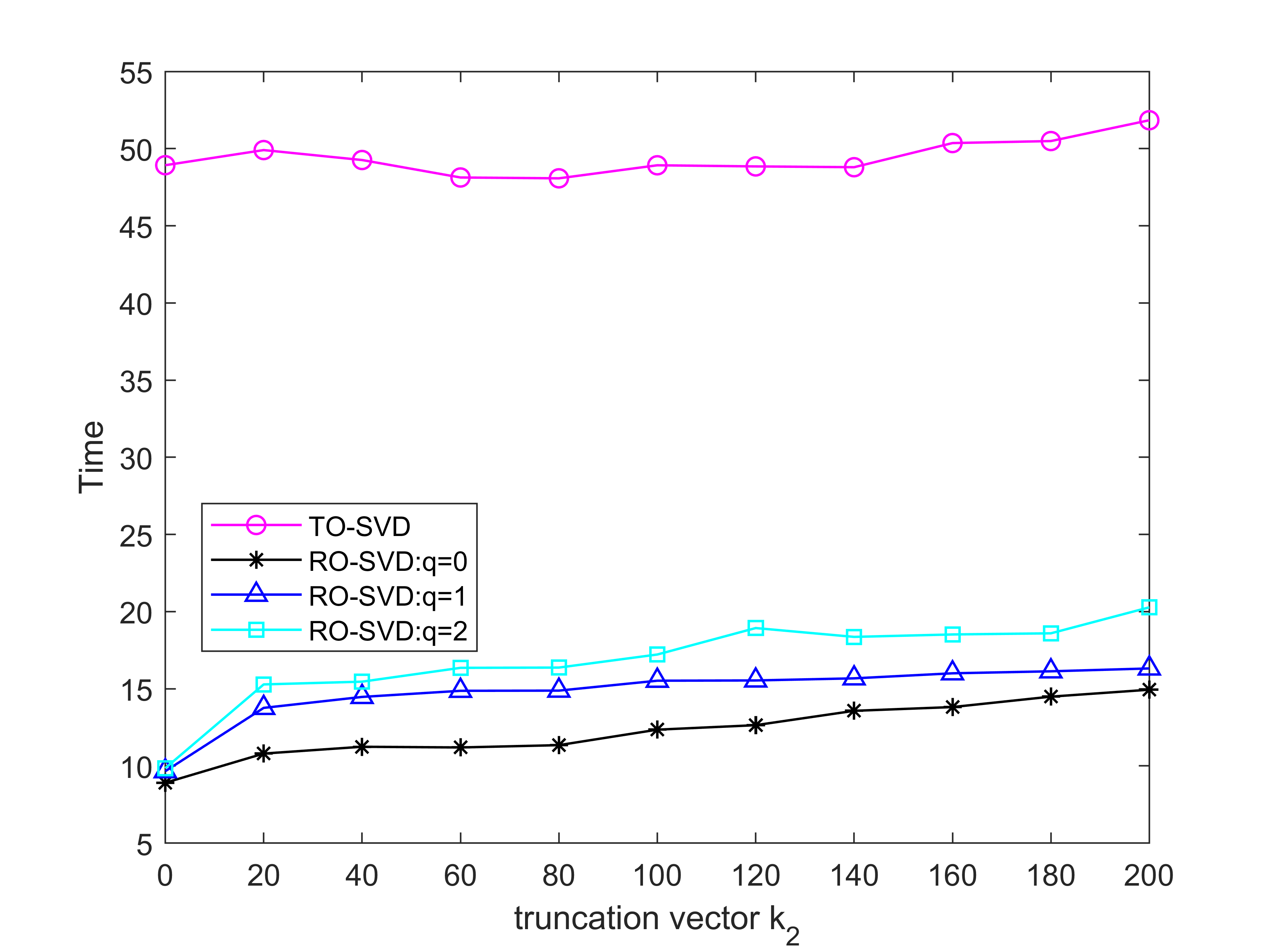}
  	}
  	\quad
  	\subfigure[Relative errors of Tensor 2]{
  		\includegraphics[width=7.5cm]{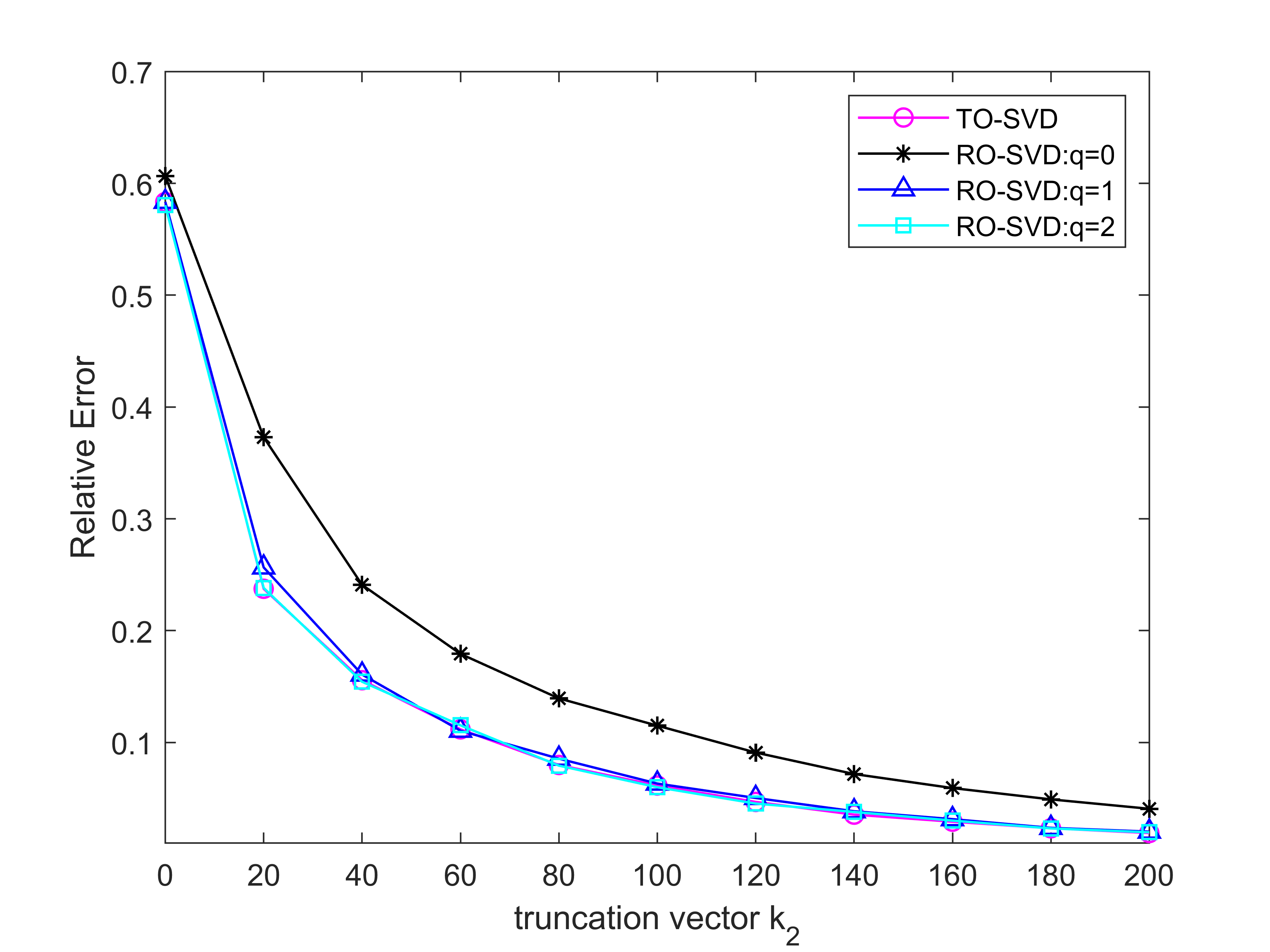}
  	}
  	\caption{Comparison results of the TO-SVD and RO-SVD on the test tensors ($k_{1} = 30$).}
  	\label{tensor12}
  \end{figure}

\begin{figure}[htbp]
	\centering
	\subfigure[Time of Tensor 1]{
		\includegraphics[width=7.5cm]{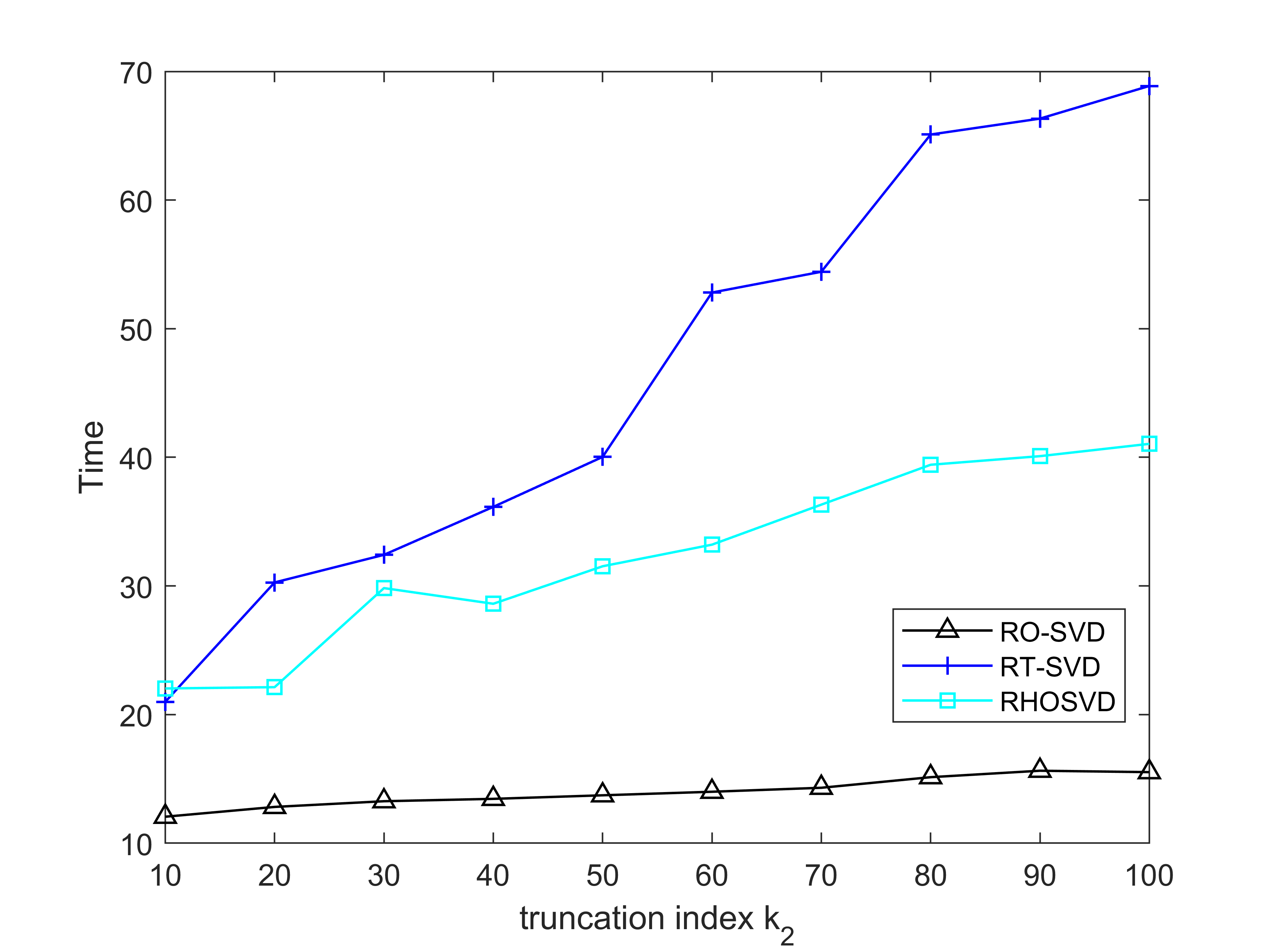}
	}
	\quad
	\subfigure[Relative errors of Tensor 1 ]{
		\includegraphics[width=7.5cm]{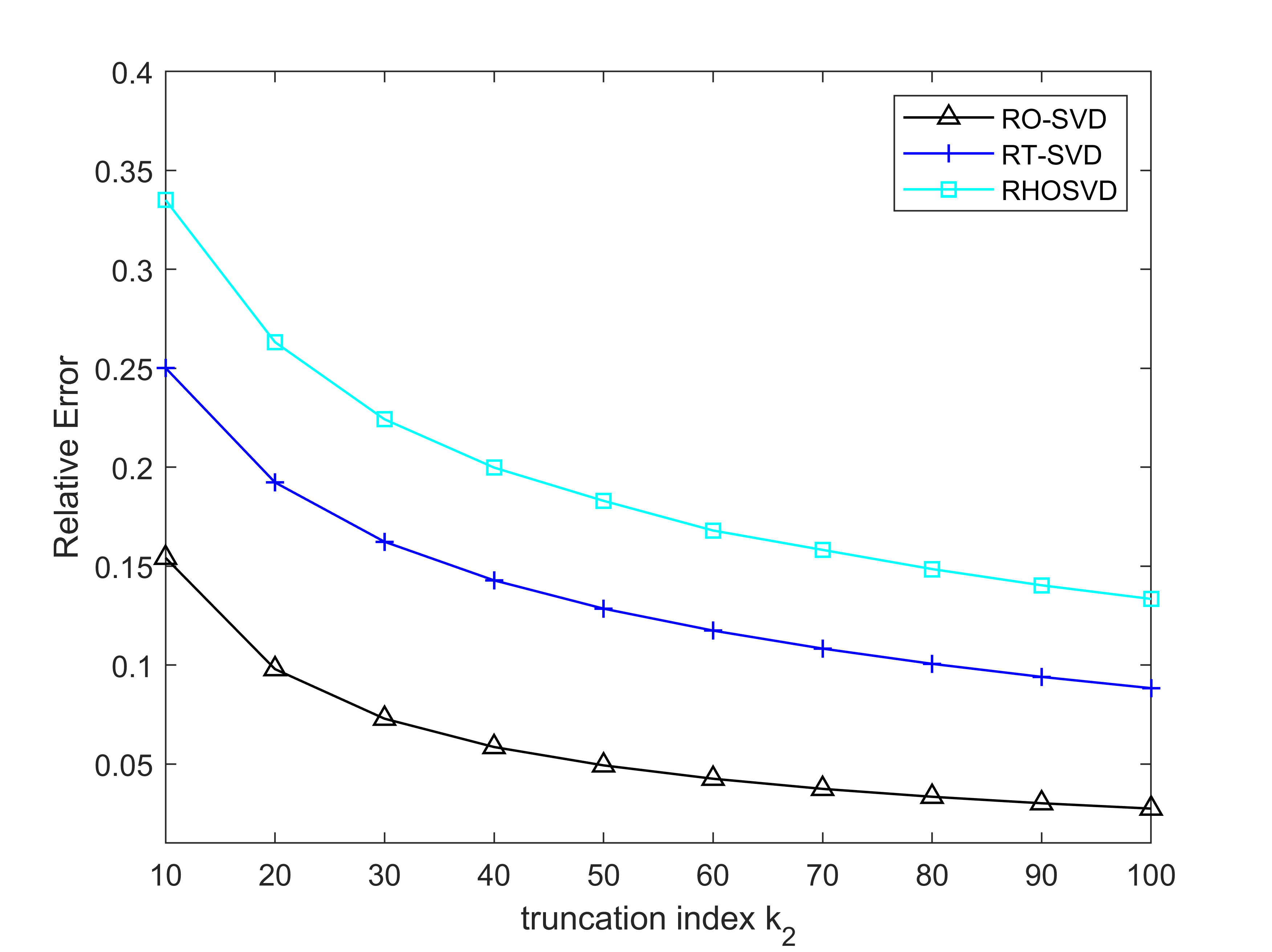}
	}
	\quad
	\subfigure[Time of Tensor 2]{
		\includegraphics[width=7.5cm]{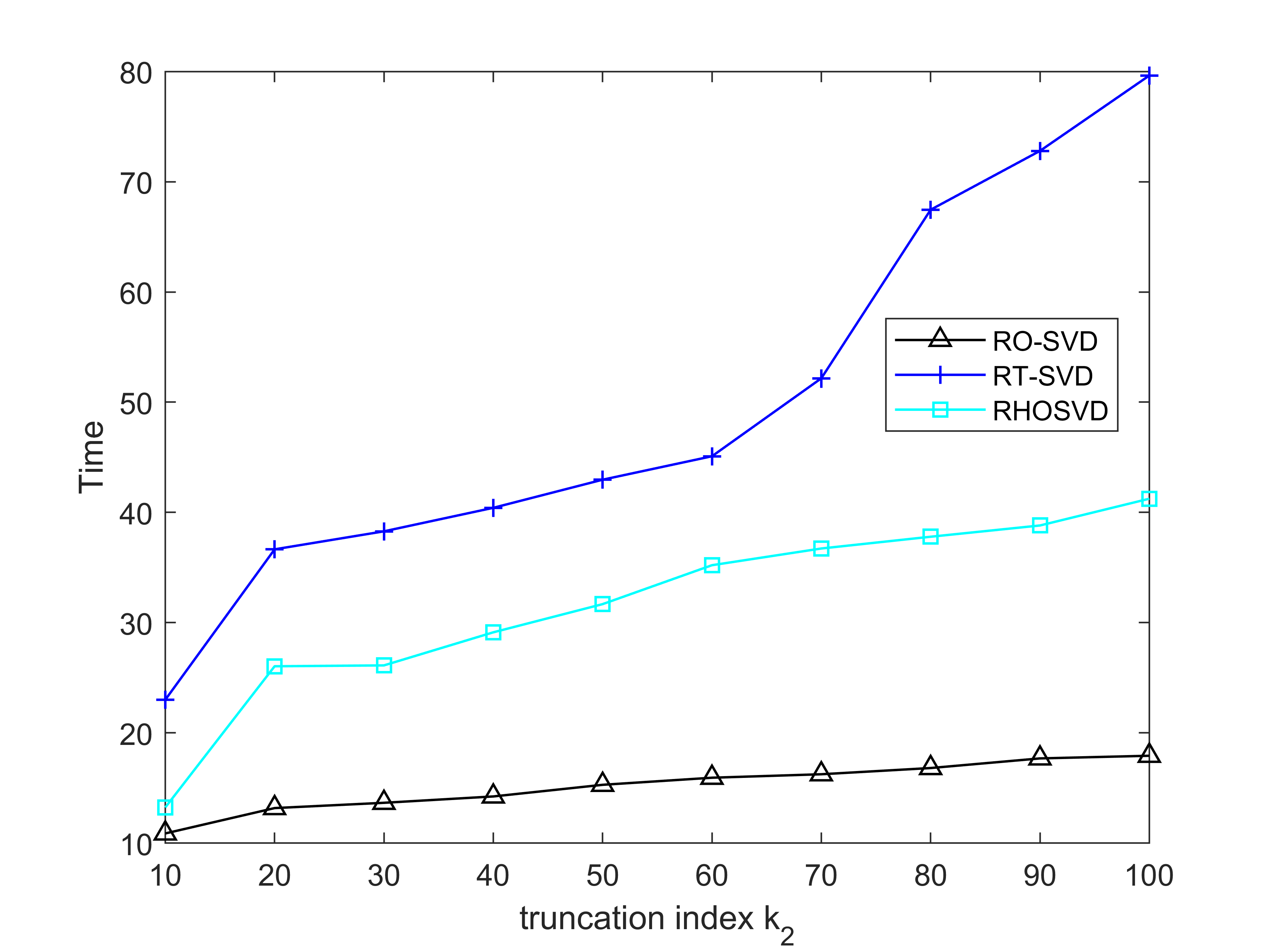}
	}
	\quad
	\subfigure[Relative errors of Tensor 2]{
		\includegraphics[width=7.5cm]{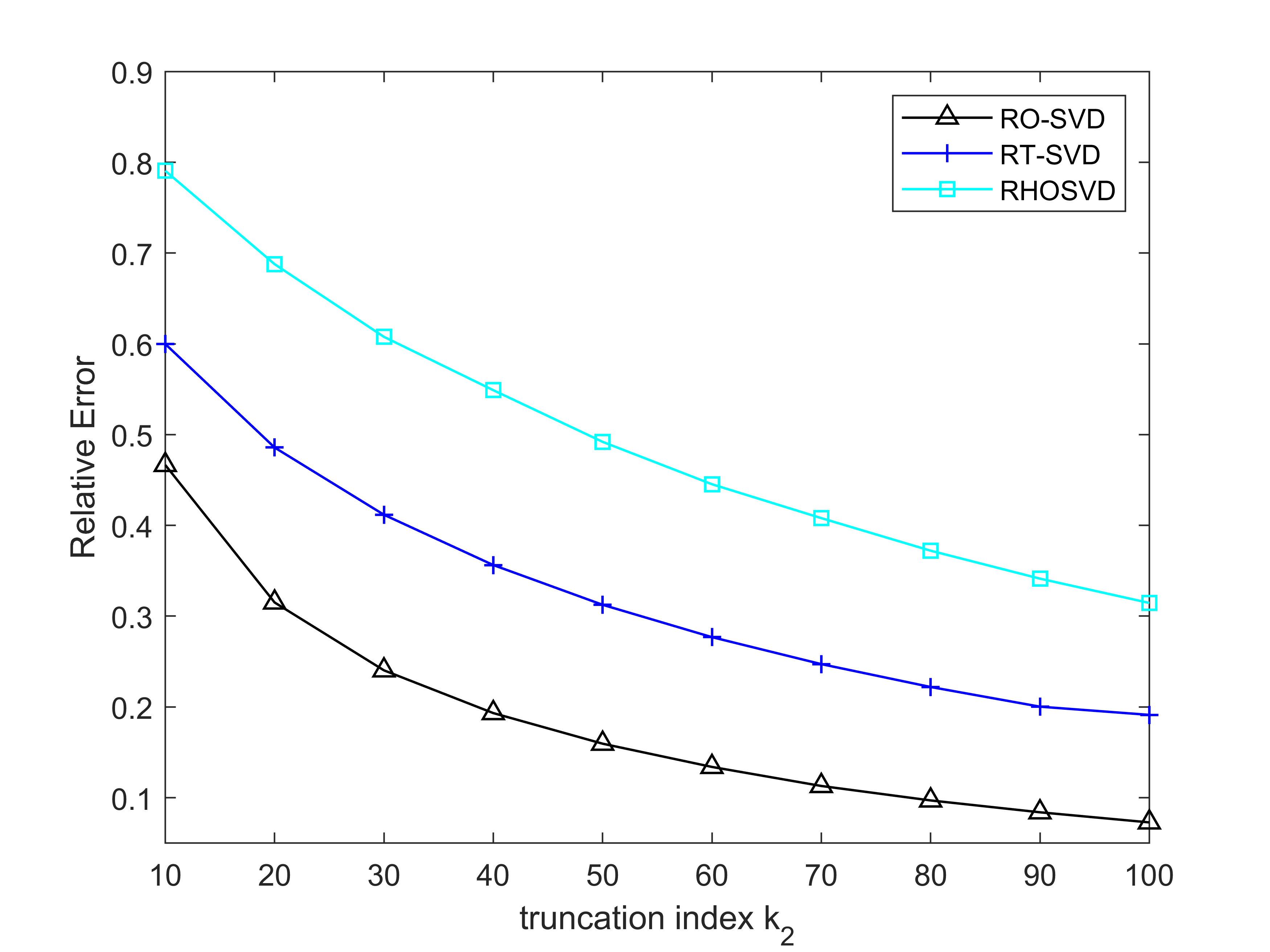}
	}
	\caption{Comparison results of three tensor randomized methods (RO-SVD, RT-SVD, and RHOSVD)
	on the test tensors ($k_{1} = 30$).}
	\label{tensor12OTH}
\end{figure}

\section{Conclusion} \label{Conclusion}
\hskip 2em The contributions of this paper are twofold: we revisit the O-SVD and refine its process from the viewpoint of the TTr1SVD and a truncated version for the O-SVD is given. Based on recent results on the randomized SVD template, we design a randomized algorithm for the O-SVD for third-order oriented tensors. In addition, we give the corresponding probabilistic error analysis of the RO-SVD. The performance of the RO-SVD is better than the RT-SVD for the same compression ratio and better than  the RHOSVD for the same size core tensor and shows great advantages in time cost if the tensor is well-oriented.

\hskip 2em  In the future, we will continue a further study of the adaptive randomized approach for the case where the target rank is unknown or cannot be estimated in advance. One
more potential research direction is the use of updated SVD algorithms \cite{fastbrand2006} to study dynamic video streaming.

	
  \section*{Acknowledgments}
  We would like to acknowledge the handling editor and three anonymous referees
for their useful comments and constructive suggestions which helped considerably to improve the quality of the paper.
  This work is supported by the National Natural Science Foundation of China (No. 12271108, 11801534),
  the Innovation Program of Shanghai Municipal Education Committee and  the Fundamental Research Funds for the Central Universities (No. 202264006).
%

\bibliographystyle{siam}
{\small
\bibliography{references}
}
\end{document}